\documentclass{amsart}

\usepackage{amsmath, amssymb, hyperref, mathrsfs, paralist, thmtools, tikz}
\usetikzlibrary{matrix, arrows}

\definecolor{my-linkcolor}{rgb}{0.75,0,0}
\definecolor{my-citecolor}{rgb}{0,0.5,0}
\definecolor{my-urlcolor}{rgb}{0,0,0.75}
\hypersetup{
    colorlinks, 
    linkcolor={my-linkcolor},
    citecolor={my-citecolor}, 
    urlcolor={my-urlcolor}
}

\DeclareMathOperator{\bbC}{\mathbb{C}}
\DeclareMathOperator{\bbR}{\mathbb{R}}

\DeclareMathOperator{\frakL}{\mathfrak{L}}

\DeclareMathOperator{\cp}{\mathbb{C}P}
\DeclareMathOperator{\ic}{IC}
\DeclareMathOperator{\pr}{\operatorname{pr}}
\DeclareMathOperator{\too}{\longrightarrow}
\DeclareMathOperator{\surj}{\twoheadrightarrow}

\declaretheorem[parent=section]{theorem}
\declaretheorem[unnumbered, name=Theorem]{theorem*}
\declaretheorem[sibling=theorem]{proposition}
\declaretheorem[unnumbered, name=Proposition]{proposition*}

\declaretheorem[unnumbered, name=Conjecture]{conjecture*}

\declaretheorem[unnumbered, style=definition, name=Definition]{definition*}
\declaretheorem[sibling=theorem]{lemma}
\declaretheorem[unnumbered, name=Lemma]{lemma*}
\declaretheorem[sibling=theorem, style=remark]{remark}
\declaretheorem[unnumbered, style=remark, name=Remark]{remark*}
\declaretheorem[sibling=theorem, style=remark]{assumption}
\declaretheorem[unnumbered, style=remark, name=Assumption]{assumption*}

\usepackage{fullpage, microtype}

\title{Torus actions and tensor products of intersection cohomology}
\author{Asilata Bapat}
\date{\today}
\address{Department of Mathematics, The University of Chicago, Chicago, IL 60637}
\email{asilata@math.uchicago.edu}

\begin{document}
\begin{abstract}
  Given certain intersection cohomology sheaves on a projective variety with a torus action, we relate the cohomology groups of their tensor product to the cohomology groups of the individual sheaves. We also prove a similar result in the case of equivariant cohomology.
\end{abstract}
\maketitle

\section{Introduction}
\label{sec:intro}
Let $X$ be a smooth complex projective variety together with an action of an algebraic torus $T$ with isolated fixed points. We fix a one-parameter subgroup $\lambda\colon \bbC^*\to T$, such that the $\lambda$-fixed points on $X$ are exactly the $T$-fixed points. Let $W$ denote the (finite) set of fixed points. Consider the Bia\l{}ynicki-Birula decomposition (see, e.g., \cite{bialynicki-birula}) of $X$, defined as follows. For each $w\in W$ the \emph{attracting set} is
\[
X_w = \{x\in X\mid \lim_{t\to 0}\lambda(t)\cdot x = w\},\quad t\in \bbC^*.
\]
Then each $X_w$ is a $\lambda$-stable affine space, and hence the decomposition $X = \coprod_{w\in W}X_w$ is a cell decomposition. For the purposes of this paper, we make the following additional assumptions on the $T$-action on $X$.
\begin{assumption}
  \label{assum:stratification}
  The cell decomposition $X = \coprod_{w\in W}X_w$ is an algebraic stratification of $X$. In particular, the closure $\overline{X_w}$ of every cell $X_w$ is a union of cells.
\end{assumption}
\begin{assumption}
  \label{assum:contraction}
  For each $w\in W$, there is a one-parameter subgroup $\lambda_w\colon \bbC^*\to T$ and a neighbourhood $V_w$ of $w$ such that $\lim_{t\to 0}\lambda_w(t)\cdot v = w$ for every $v\in V_w$ and $t\in \bbC^*$.  
\end{assumption}

For each $w\in W$, let $\ic_w$ denote the intersection cohomology complex on the closure of the cell $X_w$, extended by zero to all of $X$. The main theorem of the paper describes the cohomology of the tensor products of a collection of $\ic_w$, in terms of the tensor products of the cohomologies of the individual $\ic_w$.

\subsection{Main result} Let $H = H^\bullet(X)$ be the cohomology ring of the base space $X$. For any complex $\frakL$ on $X$, its cohomology $H^\bullet(\frakL) = H^\bullet(X, \frakL)$ is a module over $H$. In particular, each $H^\bullet(\ic_w)$ is an $H$-module.


Let $\Delta\colon X\to X^m$ be the diagonal embedding. Consider any complexes $\frakL_1,\ldots,\frakL_m$ in the category $D^b(X)$, which is the bounded derived category of constructible sheaves on $X$. Then their derived tensor product is also an element of $D^b(X)$, and will be denoted by $\frakL_1\otimes\cdots\otimes\frakL_m$. Recall that
\[
\frakL_1\otimes\cdots\otimes \frakL_m = \Delta^*(\frakL_1\boxtimes\cdots\boxtimes\frakL_m).
\]
There is a natural cup-product map that connects the cohomology spaces $H^\bullet(\frakL_i)$ to the cohomology of the tensor product $\frakL_1\otimes\cdots\otimes \frakL_m$.
\begin{theorem}
  \label{thm:maintheorem}
  Let $(w_1,\ldots,w_m)$ be an $m$-tuple of $T$-fixed points of $X$. If the assumptions
  \ref{assum:stratification} and \ref{assum:contraction} hold, then the natural map
  \begin{equation}
  \label{eq:cohom-mult}
  H^\bullet(\ic_{w_1})\otimes_H\cdots\otimes_HH^\bullet(\ic_{w_m}) \to H^\bullet(\ic_{w_1}\otimes \cdots\otimes\ic_{w_m})
\end{equation}
  is an isomorphism.
\end{theorem}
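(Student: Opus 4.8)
The plan is to realise the natural map \eqref{eq:cohom-mult} as the edge map of an Eilenberg--Moore type spectral sequence for the pullback along the diagonal, and to show that freeness of the individual cohomology modules forces this spectral sequence to collapse. Throughout I abbreviate $M_i = H^\bullet(\ic_{w_i})$ and $H = H^\bullet(X)$.

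First I would record two structural consequences of the hypotheses. Because the Bia\l{}ynicki--Birula cells $X_w$ are affine spaces, the cell decomposition of Assumption~\ref{assum:stratification} has only even-(real-)dimensional cells; hence $X$ (and every $X^m$) is simply connected and formal, and $H$ is concentrated in even degrees, so it is an honestly commutative ring. Simple connectivity and formality are exactly what is needed for the Eilenberg--Moore spectral sequence of the pullback $\Delta\colon X\to X^m$ to converge and, for formal inputs, to collapse: applied to $\mathcal F = \ic_{w_1}\boxtimes\cdots\boxtimes\ic_{w_m}$ it takes the form
\[
E_2 = \operatorname{Tor}^{H^\bullet(X^m)}_{\bullet}\big(H^\bullet(X^m,\mathcal F),\, H\big)\ \Longrightarrow\ H^\bullet\big(X,\Delta^*\mathcal F\big) = H^\bullet(\ic_{w_1}\otimes\cdots\otimes\ic_{w_m}).
\]
By the Künneth formula $H^\bullet(X^m,\mathcal F)\cong M_1\otimes_\bbC\cdots\otimes_\bbC M_m$ as a module over $H^\bullet(X^m) = H^{\otimes m}$, with $\Delta$ acting through the multiplication $H^{\otimes m}\to H$. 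Granting collapse, the theorem reduces to showing that the higher Tor groups vanish and that $\operatorname{Tor}_0 = \big(M_1\otimes_\bbC\cdots\otimes_\bbC M_m\big)\otimes_{H^{\otimes m}} H$ maps isomorphically to the target; the equality of this base change with $M_1\otimes_H\cdots\otimes_H M_m$ is the standard identification of iterated tensor products over a commutative ring with base change along the multiplication map.

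The heart of the argument, and the step where I expect the real difficulty, is proving that each $M_i$ is a \emph{free} $H$-module; once this is known, $M_1\otimes_\bbC\cdots\otimes_\bbC M_m$ is free over $H^{\otimes m}$, all higher Tor vanish, and the spectral sequence is concentrated in a single row. To establish freeness I would filter $X$ by the closed unions of cells furnished by Assumption~\ref{assum:stratification} and examine the resulting spectral sequence computing $H^\bullet(\ic_{w_i})$. On each cell $X_v$ the restriction $\ic_{w_i}|_{X_v}$ is constant with stalk $(\ic_{w_i})_v$, and Assumption~\ref{assum:contraction} lets me compute the contribution of the cell via the contraction of a neighbourhood of $v$ onto $v$, identifying it with the (co)stalk at $v$. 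Pointwise purity of intersection cohomology complexes makes every such stalk even, so every term of the spectral sequence sits in even degree, all differentials vanish, and it degenerates. A Leray--Hirsch style lift then promotes the even-degree generators to an $H$-basis, matching the cellular basis $\{[\overline{X_v}]\}$ of $H$, and exhibits $M_i$ as free.

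Finally I would assemble the pieces: freeness kills the higher Tor, collapse of the Eilenberg--Moore spectral sequence identifies $H^\bullet(\ic_{w_1}\otimes\cdots\otimes\ic_{w_m})$ with $\operatorname{Tor}_0$, and naturality of Künneth and of the edge map identifies this with the cup-product map \eqref{eq:cohom-mult}. The main obstacle is thus concentrated in the freeness statement and in justifying the collapse; I expect the cleanest route to both is to run the same contraction-and-purity analysis one level up in $T$-equivariant cohomology, where localisation to the isolated fixed points makes freeness over $H_T^\bullet(X)$ transparent, and then to specialise the equivariant parameters to recover the non-equivariant statement --- which is also why the equivariant version of the theorem should fall out in parallel.
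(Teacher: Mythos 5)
Your argument has a fatal gap at precisely the step you flag as its heart: the freeness claim is false, and with it the entire collapse mechanism. Take $X = \cp^1$ with the standard $\bbC^*$-action and let $w$ be the fixed point whose attracting set is $\{w\}$ itself, so that $\ic_w$ is a skyscraper sheaf at $w$. Then $H^\bullet(\ic_w) = \bbC$, on which $H = \bbC[c]/(c^2)$ acts through $c\mapsto 0$: a torsion $H$-module, not a free one. The same failure occurs for every $w_i$ whose cell is not dense in $X$, since $H^\bullet(\ic_{w_i})$ is annihilated by classes Poincar\'e dual to cycles disjoint from $\overline{X_{w_i}}$. (Your closing suggestion fares no better: $H^\bullet_T(\ic_w)$ is free over $H^\bullet(BT)$, but not over $H_T = H^\bullet_T(X)$, which is the ring your Tor argument would need.) Worse, the higher Tor groups genuinely do not vanish: in this example $\operatorname{Tor}_i^{H\otimes_\bbC H}(\bbC, H)\neq 0$ for all $i\geq 0$, because $H\otimes_\bbC H$ is Artinian local and $H$ is not free over it, while over an Artinian local ring finite projective dimension forces freeness. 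So your $E_2$-page would be infinite-dimensional, whereas the abutment $H^\bullet(\ic_w\otimes\ic_w) = \bbC$ is one-dimensional; no collapse onto $\operatorname{Tor}_0$ is possible, even granting the existence and convergence of a sheaf-theoretic Eilenberg--Moore spectral sequence for $\Delta^*$ (itself a non-standard claim for constructible complexes that you would have to prove). Note that the theorem itself does hold here, since $\bbC\otimes_H\bbC\cong\bbC$: the structural lesson is that the statement is an isomorphism for the \emph{underived} tensor product $\otimes_H$ only, and the corresponding derived statement is false, so any route that identifies $H^\bullet(\ic_{w_1}\otimes\cdots\otimes\ic_{w_m})$ with a derived base change over $H^{\otimes m}$ cannot succeed.

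The paper's proof never leaves the underived setting. It inducts along the Morse--Bia\l{}ynicki-Birula filtration $X_0\subset X_1\subset\cdots$, with $L_{j,n} = i_n^*\ic_{w_j}$, proving that the multiplication map is an isomorphism at each stage (\autoref{prop:mainprop}). The two key inputs are: purity of $H^\bullet(L_{1,n}\otimes\cdots\otimes L_{m,n})$, which kills all connecting homomorphisms and yields the short exact sequence of \autoref{lem:purity-ses}; and Ginzburg's results on the classes $c_n$ dual to the closures of the expanding cells --- namely that $c_n$ annihilates the cohomology of complexes supported on $X_{n-1}$, and that the cokernel of $c_n$ on $H^\bullet(M_{m,n})$ is $H^\bullet(M_{m,n-1})$ --- which control the failure of exactness of $\otimes_H$ just enough to run the snake-lemma argument on diagram \eqref{eq:big-comm-diagram}. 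The purity-and-contraction analysis you sketch is indeed the right kind of tool, but it must be aimed at purity of the cohomology of the tensor product along this filtration, not at freeness of the individual factors.
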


Recall that since $X$ is a $T$-space, each IC sheaf $\ic_{w_j}$ is $T$-equivariant, and so is the tensor product $\ic_{w_1}\otimes\cdots\otimes \ic_{w_m}$. Let $H_T = H^\bullet_T(X)$ be the $T$-equivariant cohomology of $X$. For any $T$-equivariant complex $\frakL$ on $X$, its $T$-equivariant cohomology $H^\bullet_T(\frakL) = H^\bullet_T(X,\frakL)$ is a graded $H_T$-module. As before, there is a cup product map for $T$-equivariant cohomology, which factors through $H_T$.

\begin{theorem}
  \label{thm:maintheorem-equivariant}
  Under the assumptions \ref{assum:stratification} and \ref{assum:contraction}, the natural map
  \[
  H^\bullet_T(\ic_{w_1})\otimes_{H_T}\cdots \otimes_{H_T}H_T^\bullet(\ic_{w_m})\to H^\bullet_T(\ic_{w_1}\otimes\cdots\otimes \ic_{w_m})
  \]
  is an isomorphism.
\end{theorem}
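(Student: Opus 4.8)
The plan is to deduce Theorem \ref{thm:maintheorem-equivariant} from the non-equivariant Theorem \ref{thm:maintheorem} by a standard equivariant-formality argument followed by a graded Nakayama lemma. Write $S = H^\bullet_T(\mathrm{pt}) = H^\bullet(BT)$, a polynomial ring concentrated in even degrees with $S_0 = \bbC$ and augmentation ideal $S^+ = S_{>0}$; recall that $H_T = H^\bullet_T(X)$ is a graded $S$-algebra and that every $H^\bullet_T(\frakL)$ is a graded $S$-module, computed as cohomology on the Borel construction $X_T = ET\times_T X$, which fibres over $BT$ with fibre $X$. The whole argument reduces to a single substantive input — the degeneration of the Borel spectral sequence for all the complexes involved — so that is where I expect the real work to lie; once that is in place, everything else is formal.

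First I would establish equivariant formality for every sheaf in sight. For a $T$-equivariant complex $\frakL$ on $X$, the Leray--Serre spectral sequence of $X_T\to BT$ reads
\[
E_2^{p,q} = H^p(BT)\otimes_\bbC H^q(\frakL) \Longrightarrow H^{p+q}_T(\frakL),
\]
using that $BT$ is simply connected. Since $X$ carries a Bia\l{}ynicki--Birula decomposition into affine cells, $H = H^\bullet(X)$ is concentrated in even degrees; by the parity (pointwise purity) properties of the $\ic_{w_j}$ on this stratification each $H^\bullet(\ic_{w_j})$ is likewise concentrated in a single parity, and by Theorem \ref{thm:maintheorem} so is $H^\bullet(\ic_{w_1}\otimes\cdots\otimes\ic_{w_m})$, being an iterated tensor product over the even ring $H$ of single-parity modules. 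As $S$ is even, every differential $d_r$ shifts the total degree by one and hence vanishes, so the spectral sequence degenerates at $E_2$. Consequently each of $H^\bullet_T(\ic_{w_j})$, the constant-sheaf cohomology $H_T$, and $H^\bullet_T(\ic_{w_1}\otimes\cdots\otimes\ic_{w_m})$ is a free $S$-module, and the reduction functor $(-)\otimes_S\bbC$ recovers the corresponding ordinary cohomology; in particular $H_T\otimes_S\bbC = H$. The parity of the tensor product is not obvious a priori, and it is precisely Theorem \ref{thm:maintheorem} that supplies it.

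Next I would identify the reduction of the equivariant cup-product map. Writing $M_j = H^\bullet_T(\ic_{w_j})$, the map in question is the $H_T$-linear (hence $S$-linear) arrow
\[
\phi\colon M_1\otimes_{H_T}\cdots\otimes_{H_T}M_m \too H^\bullet_T(\ic_{w_1}\otimes\cdots\otimes\ic_{w_m}).
\]
Applying $(-)\otimes_S\bbC = (-)\otimes_{H_T}H$ to the source and using the associativity identity $(P\otimes_{H_T}Q)\otimes_{H_T}H \cong (P\otimes_{H_T}H)\otimes_H(Q\otimes_{H_T}H)$ repeatedly, together with $M_j\otimes_S\bbC = H^\bullet(\ic_{w_j})$, I obtain a canonical isomorphism between the reduction of the source and $H^\bullet(\ic_{w_1})\otimes_H\cdots\otimes_H H^\bullet(\ic_{w_m})$. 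The reduction of the target is $H^\bullet(\ic_{w_1}\otimes\cdots\otimes\ic_{w_m})$, and under these identifications $\phi\otimes_S\bbC$ is exactly the non-equivariant cup-product map of Theorem \ref{thm:maintheorem}; hence $\phi\otimes_S\bbC$ is an isomorphism.

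Finally I would run the Nakayama argument over the connected graded ring $S$. Both the source and target of $\phi$ are bounded below in the cohomological grading. Right-exactness of $(-)\otimes_S\bbC$ identifies $\operatorname{coker}(\phi)\otimes_S\bbC$ with $\operatorname{coker}(\phi\otimes_S\bbC)=0$; as $\operatorname{coker}(\phi)$ is bounded below, graded Nakayama forces it to vanish, so $\phi$ is surjective. For injectivity, set $K=\ker\phi$ and apply $\operatorname{Tor}^S_\bullet(-,\bbC)$ to the exact sequence $0\to K\to M_1\otimes_{H_T}\cdots\otimes_{H_T}M_m\to H^\bullet_T(\ic_{w_1}\otimes\cdots\otimes\ic_{w_m})\to 0$: since the target is free over $S$ we have $\operatorname{Tor}_1^S(\text{target},\bbC)=0$, so $K\otimes_S\bbC$ injects into the reduction of the source with image $\ker(\phi\otimes_S\bbC)=0$. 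Thus $K\otimes_S\bbC=0$, and since $K$ is bounded below it vanishes by Nakayama, so $\phi$ is injective and therefore an isomorphism.
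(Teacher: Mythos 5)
Your architecture is genuinely different from the paper's. The paper does not deduce \autoref{thm:maintheorem-equivariant} from \autoref{thm:maintheorem}; it re-runs the entire induction of Section 3 in the equivariant category, proving equivariant analogues of each lemma (the exact sequences, the two-factor lemma via a comparison of Leray spectral sequences, the snake-lemma diagram). Your route --- equivariant formality for all sheaves involved, base change of the cup-product map along $H_T \to H$, then graded Nakayama plus a $\operatorname{Tor}_1$ argument --- is shorter and, where it is formal, it is correct: the identity $(M_1\otimes_{H_T}\cdots\otimes_{H_T}M_m)\otimes_S\bbC \cong (M_1\otimes_S\bbC)\otimes_H\cdots\otimes_H(M_m\otimes_S\bbC)$, the identification of $\phi\otimes_S\bbC$ with the non-equivariant cup product, surjectivity from right-exactness, and injectivity from freeness of the target over $S$ are all sound, \emph{given} the degeneration of the Borel spectral sequences. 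You correctly isolate that degeneration as the one substantive input.

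That is exactly where there is a genuine gap. You justify degeneration by asserting that each $H^\bullet(\ic_{w_j})$ is ``concentrated in a single parity'' as a consequence of ``parity (pointwise purity) properties'' of the IC sheaves. But purity in this paper --- \autoref{lem:purity-ses}(i), and the results of Ginsburg that it relies on --- means purity of \emph{weights} of mixed Hodge structures, and weight purity does not imply concentration of cohomological degrees in one parity. Parity vanishing for IC sheaves is a nontrivial theorem even for Schubert varieties (it is part of Kazhdan--Lusztig theory, proved via recursions or small resolutions, not formally from purity), and nothing in the paper establishes it in the present generality: by the filtration of \autoref{lem:purity-ses}(ii), evenness of $H^\bullet(\ic_{w_j})$ is equivalent to evenness of all IC stalks at fixed points, which is not available here. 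The good news is that your proof is repaired by replacing parity with weight purity, which is available: \autoref{lem:purity-ses}(i) at $n=\dim X$ (and with $m=1$ for the individual factors) gives purity of $H^\bullet(\ic_{w_j})$ and of $H^\bullet(\ic_{w_1}\otimes\cdots\otimes\ic_{w_m})$; since $H^\bullet(BT)$ is also pure, each entry $E_2^{p,q}=H^p(BT)\otimes H^q(-)$ is pure of weight $p+q$, whereas a differential $d_r$ shifts total degree, hence weight, by one, and a morphism of pure Hodge structures of different weights vanishes. This is precisely the mechanism of the paper's \autoref{lem:equivariant-purity}, and it also yields the freeness over $S$ of the target that your $\operatorname{Tor}_1$ step requires. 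With that substitution, your argument goes through and is a legitimate alternative to the paper's proof.
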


\begin{remark}
  \autoref{thm:maintheorem-equivariant} is used in \cite{gk}.
\end{remark}

In \autoref{sec:setup} we recall some standard constructions and some notation. In \autoref{sec:mainproof} and \autoref{sec:mainproof-equivariant}, we describe the proofs of \autoref{thm:maintheorem} and \autoref{thm:maintheorem-equivariant} respectively.

\section{Setup}
\label{sec:setup}
We describe some additional setup and notation, following \cite{ginsburg}. Consider $S^1\subset \bbC^*$, and the $S^1$-action on $X$ under $\lambda$. There is a moment map $f\colon X\to \bbR$ for this action, which is known to be a Morse function with critical points exactly the fixed points of $\lambda$ (see, e.g., \cite{audin}). Moreover, each cell of the Morse decomposition of $X$ with respect to $f$ is a union of Bia\l{}ynicki-Birula cells.

Let $a_0 < a_1 < \cdots$ be the critical values of $f$. Let $X_n = f^{-1}((-\infty, a_n])$. Then $X_0 \subset X_1 \subset \cdots$ is an increasing filtration of $X$ by closed subvarieties. Set $U_n = X_n\backslash X_{n-1}$. We may assume that $U_n$ consists of a single stratum $X_w$; otherwise we can replace the filtration $\{X_n\}$ of $X$ by a suitable refinement. We have the following inclusions:
\[
X_n\stackrel{i_n}{\hookrightarrow} X, \quad X_{n-1}\stackrel{v}{\hookrightarrow} X_n \stackrel{u}{\hookleftarrow} U_n.
\]
Recall that the \emph{minus-decomposition} $X = \coprod_{w\in W} X_w^-$ is the cell decomposition given by the expanding sets of the fixed points $w$. Suppose that $w\in X_n\backslash X_{n-1}$ for some $n$. Then the closed submanifolds $\overline{X_w^-}$ and $X_n$ intersect transversally in the single point $\{w\}$. Let $c_n\in H^\bullet(X)$ be the Poincar\'e dual to the homology class of $\overline{X_w^-}$. The class $c_n$ may be interpreted as the Thom class of the normal bundle to $\overline{X_w^-}$ in $X$. The vector space $H^\bullet(X)$ is generated by the collection $\{c_n\}$. Finally let $L_{j,n} = i_n^*\ic_{w_j}$ for each $j$ and $n$.

\begin{proposition}
  \label{prop:mainprop}
  For every $n$, the multiplication map
  \begin{equation}
    \label{eq:mult-map-inductive}
    H^\bullet(L_{1,n})\otimes_H\cdots \otimes_HH^\bullet(L_{m,n})\to H^\bullet(L_{1,n}\otimes\cdots\otimes L_{m,n})
  \end{equation}
  is an isomorphism.
\end{proposition}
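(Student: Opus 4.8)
The plan is to prove the statement by induction on $n$, using the filtration $X_0\subset X_1\subset\cdots$ together with the recollement attached to the decomposition $X_n = X_{n-1}\sqcup U_n$. The base case is the smallest nonempty piece $X_0 = U_0$, which by construction is a single attracting cell, hence an affine space and in particular contractible. There the restriction map $H\to H^\bullet(X_0)$ is the augmentation onto $\bbC$, and each $L_{j,0}$ is a constant complex on $X_0$ (the IC sheaf restricted to a single simply connected stratum is a constant local system), so \eqref{eq:mult-map-inductive} becomes the Künneth identity ``the stalk of a tensor product is the tensor product of the stalks'' at the unique fixed point in $X_0$, which is visibly an isomorphism.

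For the inductive step I would use the open--closed decomposition $X_{n-1}\stackrel{v}{\hookrightarrow}X_n\stackrel{u}{\hookleftarrow}U_n$. For any $\frakL$ on $X_n$ there is a distinguished triangle $u_!u^*\frakL\to\frakL\to v_*v^*\frakL\xrightarrow{+1}$, and applying $H^\bullet(X_n,-)$ produces a long exact sequence with terms $H^\bullet_c(U_n,u^*\frakL)$, $H^\bullet(\frakL)$, and $H^\bullet(v^*\frakL)$. Taking $\frakL = L_{j,n}$ gives $v^*L_{j,n} = L_{j,n-1}$; taking $\frakL = \mathfrak{T}_n := L_{1,n}\otimes\cdots\otimes L_{m,n}$ gives, since $u^*$ and $v^*$ are monoidal and $u_!,v_*$ satisfy the projection formula, $u^*\mathfrak{T}_n = \bigotimes_j u^*L_{j,n}$ and $v^*\mathfrak{T}_n = \bigotimes_j L_{j,n-1}$. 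Thus the recollement of the tensor product is assembled from the restrictions of the individual factors, and the multiplication map becomes a morphism between these two long exact sequences.

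The essential input is \emph{purity}. Because each $\ic_{w_j}$ is pointwise pure and the cells $U_n\cong\bbC^{d}$ are even-dimensional, I expect all of these long exact sequences to degenerate into short exact sequences $0\to H^\bullet_c(U_n,u^*\frakL)\to H^\bullet(\frakL)\to H^\bullet(v^*\frakL)\to 0$, with the $H$-module structure on the left-hand term implemented by cup product with the Thom class $c_n$. Moreover, by \autoref{assum:contraction} the cell $U_n$ contracts to $w$, so $u^*\ic_{w_j}$ is a constant complex on $U_n$ with stalk the stalk of $\ic_{w_j}$ at $w$; hence $H^\bullet_c(U_n,u^*L_{j,n})$ is that stalk shifted by $2\dim_\bbC U_n$, and the multiplication map restricted to these $U_n$-supported pieces is once more the Künneth isomorphism at the point $w$. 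Combined with the inductive hypothesis on the $X_{n-1}$-terms, a five-lemma argument then forces the map on the middle terms $H^\bullet(\mathfrak{T}_n)$ to be an isomorphism, which is exactly \eqref{eq:mult-map-inductive}.

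The main obstacle is the bookkeeping that identifies the source $H^\bullet(L_{1,n})\otimes_H\cdots\otimes_H H^\bullet(L_{m,n})$ with the short exact sequence of the target. The modules $H^\bullet(L_{j,n})$ need not be free over $H$ --- for instance when some $\ic_{w_j}$ is a skyscraper, in which case $c_n$ acts as zero on its cohomology --- so tensoring the individual short exact sequences over $H$ is not exact, and the naive comparison produces spurious contributions with the wrong cohomological shifts. The heart of the argument is therefore to show, using the $c_n$-module structure together with the purity degeneration, that precisely those extra terms on which some factor kills $c_n$ die in the tensor product over $H$, so that the induced map on the new $U_n$-pieces is genuinely an isomorphism. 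Controlling this interaction between $\otimes_H$ and the Morse filtration is where the real work lies.
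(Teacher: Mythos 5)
Your skeleton---induction on $n$, the recollement triangle $u_!u^*\to\operatorname{id}\to v_*v^*$, purity forcing the long exact sequences to split, and a final five-lemma chase---is exactly the strategy of the paper, but the step you defer in your last paragraph as ``where the real work lies'' is precisely the content of the paper's proof, so what you have is an outline with the crucial ideas missing. Moreover, one assertion in your sketch is false as stated: the map on the $U_n$-pieces is \emph{not} ``once more the K\"unneth isomorphism at the point $w$.'' Each compactly supported term $H^\bullet_c(U_n,u^*L_{j,n})$ carries its own Thom shift by $2\dim_\bbC U_n$, so a tensor product of $m$ such terms cannot match $H^\bullet_c(U_n,u^*L_{1,n}\otimes\cdots\otimes u^*L_{m,n})$, which carries only one such shift; and since $\otimes_H$ is only right exact and the modules $H^\bullet(L_{j,n})$ are not free, the source of \eqref{eq:mult-map-inductive} does not inherit any obvious three-term sequence compatible with the one for the target. (A minor point: the base case $X_0$ is a single point, being a compact sublevel set consisting of one stratum, not a positive-dimensional affine cell; your argument there survives.)

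The paper resolves the difficulty by an asymmetric construction that your proposal never arrives at. Only the \emph{first} factor keeps its $u_!u^*$-term; for $j\geq 2$ one uses the second triangle \eqref{eq:othertriangle} to replace $H^\bullet(L_{j,n})$ by its quotient $H^\bullet(u_*u^*L_{j,n})$ (the unshifted ``stalk at $w$''), producing the right-exact sequence of \autoref{lem:condensed-les},
\[
H^\bullet(u_!u^*L_{1,n})\otimes_H B_{m,n}\to H^\bullet(L_{1,n})\otimes_H A_{m,n}\to H^\bullet(v_*v^*L_{1,n})\otimes_H A_{m,n}\to 0.
\]
Establishing this requires the nontrivial factoring argument: the image of $H^\bullet(u_!u^*L_{1,n})$ in $H^\bullet(L_{1,n})$ lies in $c_nH^\bullet(L_{1,n})$, while $c_n$ annihilates $H^\bullet(v_*v^!L_{j,n})$, so after moving $c_n$ across $\otimes_H$ the offending elementary tensors die---this is the precise mechanism by which, in your words, ``the extra terms on which some factor kills $c_n$ die in the tensor product.'' Then the resulting left vertical map (the map $a$ in diagram \eqref{eq:big-comm-diagram}) is shown to be an isomorphism not by a pointwise K\"unneth statement but by a projection-formula/adjunction chase (\autoref{lem:mapa-twofactors}) together with an inner induction on the number of factors $m$ (\autoref{lem:mapa-isomorphism}); only then does the snake/four lemma close the outer induction on $n$. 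Your proposal correctly identifies the obstacle, but it stops exactly where the proof must begin.
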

If $n = \dim X$, then $L_{j,n} = \ic_{w_j}$ for each $j$. Hence \autoref{thm:maintheorem} follows from this proposition, and we now focus on proving the proposition.

\section{Proof of the isomorphism}
\label{sec:mainproof}
We prove \autoref{prop:mainprop} by induction on the dimension of the strata. In the base case of $n = 0$, the space $X_0$ is a single point, and hence each $L_{j,0}$ is just a one-dimensional vector space. Hence the map \eqref{eq:mult-map-inductive} reduces to the multiplication map $\bbC\otimes\cdots\otimes \bbC \to \bbC$, which is an isomorphism.

To prove the induction step, we mainly use the following distinguished triangles:
\begin{align}
  \label{eq:maintriangle}
  u_!u^*L_{j,n}&\to L_{j,n}\to v_*v^*L_{j,n},\\
  \label{eq:othertriangle}
  v_!v^!L_{j,n}&\to L_{j,n}\to u_*u^*L_{j,n}.
\end{align}
After taking cohomology, each of the above distinguished triangles produces a long exact sequence. In our case all connecting homomorphisms of this long exact sequence vanish (see, e.g. \cite[Lemma 20]{soergel} and \cite[Proposition 3.2]{ginsburg}).

For brevity, we will use the following notation through the remainder of the paper.
\[
  \begin{aligned}
    M_{m,n} &= L_{2,n}\otimes \cdots\otimes L_{m,n},\\
    A_{m,n} &= H^\bullet(L_{2,n})\otimes_H\cdots\otimes_H H^\bullet(L_{m,n}),\\
    B_{m,n} &= H^\bullet(u_*u^*L_{2,n})\otimes_H\cdots\otimes_H H^\bullet(u_*u^*L_{m,n}).
  \end{aligned}
\]

\begin{lemma}\label{lem:purity-ses}\mbox{}
  \begin{enumerate}[(i)]
  \item The cohomology $H^\bullet(L_{1,n}\otimes\cdots\otimes L_{m,n})$ is pure. \label{item:purity}
  \item There is a short exact sequence
    \[
    0 \to H^\bullet_c(u^*L_{1,n}\otimes \cdots \otimes u^*L_{m,n})\to H^\bullet(L_{1,n}\otimes\cdots\otimes L_{m,n})\to H^\bullet(L_{1,n-1}\otimes\cdots\otimes L_{m,n-1})\to 0.
    \]
    \label{item:purity-ses}
  \end{enumerate}
\end{lemma}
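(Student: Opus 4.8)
The plan is to prove the two parts together by induction on $n$, using the recollement triangle \eqref{eq:maintriangle} for the full tensor product and a weight argument to split the resulting long exact sequence. Throughout, write $L_n = L_{1,n}\otimes\cdots\otimes L_{m,n}$ for the tensor product on $X_n$. Since the functors $u^*$ and $v^*$ commute with tensor products, tensoring the triangles \eqref{eq:maintriangle} together (equivalently, using the open--closed recollement for $U_n\stackrel{u}{\hookrightarrow}X_n\stackrel{v}{\hookleftarrow}X_{n-1}$) produces a distinguished triangle $u_!u^*L_n\to L_n\to v_*v^*L_n\to$. As $X_n$ is closed in the projective variety $X$ it is compact, so applying hypercohomology identifies the three terms with $H^\bullet_c(u^*L_n)$, $H^\bullet(L_n)$, and $H^\bullet(L_{n-1})$ respectively; here I use $u^*L_n = u^*L_{1,n}\otimes\cdots\otimes u^*L_{m,n}$ and $v^*L_n = L_{1,n-1}\otimes\cdots\otimes L_{m,n-1}$ (because $v^*i_n^* = i_{n-1}^*$). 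This is exactly the long exact sequence whose degeneration gives part~\ref{item:purity-ses}.

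The first step is to show that the cell contribution $H^\bullet_c(u^*L_n)$ is pure. The stratum $U_n = X_w$ is a single Bia\l{}ynicki-Birula cell, hence an affine space, and in particular simply connected and contractible. The restriction of each $\ic_{w_j}$ to this stratum is cohomologically locally constant, so $u^*L_{j,n}$ splits (over the contractible $X_w$ there are no higher self-extensions of the constant sheaf) as a direct sum of shifted constant sheaves with stalks the stalk cohomologies $\mathcal{H}^\bullet(\ic_{w_j})_w$. Consequently $u^*L_n$ is again a direct sum of shifted constant sheaves whose stalks are tensor products of the $\mathcal{H}^\bullet(\ic_{w_j})_w$, and therefore $H^\bullet_c(u^*L_n)$ is, up to shift and Tate twist by the compactly supported cohomology of the affine cell, a sum of such tensor products. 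By Gabber's purity theorem together with the contraction Assumption~\ref{assum:contraction} (which forces the pointwise purity of these IC stalks, as in \cite{soergel} and \cite{ginsburg}), each $\mathcal{H}^\bullet(\ic_{w_j})_w$ is pure; tensor products of pure structures are pure, so $H^\bullet_c(u^*L_n)$ is pure with weight equal to cohomological degree.

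With this in hand I would close the induction. The base case $n=0$ is the single point already treated. For the inductive step, the inductive hypothesis for part~\ref{item:purity} gives that $H^\bullet(L_{n-1})$ is pure of weight equal to degree, and the previous paragraph gives the same for $H^\bullet_c(u^*L_n)$. In the long exact sequence the connecting homomorphism $H^k(L_{n-1})\to H^{k+1}_c(u^*L_n)$ is therefore a morphism between pure Hodge structures of weights $k$ and $k+1$; by strictness of morphisms of mixed Hodge structures it must vanish. Hence the long exact sequence breaks into the short exact sequences of part~\ref{item:purity-ses}. Finally $H^\bullet(L_n)$ is an extension of $H^\bullet(L_{n-1})$ by $H^\bullet_c(u^*L_n)$, two pure structures of the same weight in each degree, so it too is pure, establishing part~\ref{item:purity} for $n$ and completing the induction.

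The main obstacle is the purity input underlying the second paragraph: everything rests on the stalk cohomologies $\mathcal{H}^\bullet(\ic_{w_j})_w$ being pure of weight equal to degree, so that the two outer terms of the long exact sequence are pure with weights matching the cohomological degree. This pointwise purity is precisely where the contraction Assumption~\ref{assum:contraction} does its work (via the cited degeneration results of \cite{soergel} and \cite{ginsburg}); pinning it down, and confirming that the weights of the cell term and of $H^\bullet(L_{n-1})$ are genuinely offset by one so that the connecting maps vanish, is the delicate point of the argument.
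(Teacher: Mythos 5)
Your proposal is correct and follows essentially the same route as the paper: induction on $n$, the recollement triangle for the full tensor product (the paper obtains the identical triangle by tensoring \eqref{eq:maintriangle} for $L_{1,n}$ with $M_{m,n}$ and applying the projection formula), purity of the cell term via the splitting of $u^*L_{1,n}\otimes\cdots\otimes u^*L_{m,n}$ into shifted constant sheaves together with pointwise purity of the IC stalks at the attracting fixed point, and vanishing of the connecting homomorphisms by the weight-mismatch argument. The only cosmetic difference is that the paper reduces purity of the cell term to purity of the stalk via a lemma deduced from Schmid's work on variations of Hodge structure, whereas you invoke Gabber purity plus Assumption \ref{assum:contraction} directly; both rest on the same underlying input.
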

\begin{proof}
The proof is by induction on $n$. When $n = 0$, we have $X_{-1} = \emptyset$ and $U = X_0$. The open inclusion $u$ is the zero map, and the closed inclusion $v$ is the identity map, hence (\ref{item:purity-ses}) is clear. In this case, all sheaves $L_{i,0}$ are supported on a single point, and hence $L_{1,0}\otimes\cdots\otimes L_{m,0}$ is also supported on a single point. In this situation, one can show (see, e.g. \cite{springer}) that $H^\bullet(L_{1,0}\otimes\cdots\otimes L_{m,0})$ is pure, which proves (\ref{item:purity}). Similar arguments have been used in \cite{ginsburg} and \cite{braden}.

For the induction step, consider the distinguished triangle \eqref{eq:maintriangle} for $L_{1,n}$, and apply the functor $(-\otimes L_{2,n}\otimes\cdots \otimes L_{m,n})$, which may be written as $(-\otimes M_{m,n})$ in our previously introduced notation. This yields the following distinguished triangle:
\[
u_!u^*L_{1,n}\otimes M_{m,n}\to L_{1,n}\otimes M_{m,n}\to v_*v^*L_{1,n}\otimes M_{m,n}.
\]
By a repeated application of the projection formula, we may write the first term of this triangle as 
\[
u_!u^*L_{1,n}\otimes M_{m,n} = u_!u^*L_{1,n}\otimes L_{2,n}\otimes\cdots \otimes L_{m,n}\cong u_!(u^*L_{1,n}\otimes\cdots\otimes u^*L_{m,n}),
\]
and the third term of this triangle as 
\begin{align*}
v_*v^*L_{1,n}\otimes M_{m,n} &= v_*v^*L_{1,n}\otimes L_{2,n}\otimes \cdots\otimes L_{m,n}\\
&\cong v_*(v^*L_{1,n}\otimes\cdots \otimes v^*L_{m,n})\\
&= v_*(L_{1,n-1}\otimes \cdots L_{m,n-1}) = v_*(L_{1,n-1}\otimes M_{m,n-1}).
\end{align*}
Taking cohomology, we obtain the following long exact sequence.
\[
\cdots \to H^\bullet_c(u^*L_{1,n}\otimes\cdots \otimes u^*L_{m,n})\to H^\bullet(L_{1,n}\otimes M_{m,n})\to H^\bullet(L_{1,n-1}\otimes M_{m,n-1})\to \cdots.
\]
The term $H^\bullet(L_{1,n-1}\otimes M_{m,n-1})$ is pure by the induction hypothesis. Recall that  $u^*L_{1,n}\otimes \cdots \otimes u^*L_{m,n}$ is a direct sum of shifted constant sheaves. To prove purity of $H^\bullet(L_{1,n-1}\otimes M_{m,n-1})$, we use the following lemma, which is a consequence of \cite[(7.24)]{schmid}.
\begin{lemma}
  Let $M$ be a simply-connected subset of a smooth projective variety and let $\underline{M}$ be the constant sheaf on $M$, together with a mixed Hodge structure. Let $x$ be any point of $M$, and let $j_x\colon \{x\}\to M$ be the inclusion. Then $H^\bullet(\underline{M})$ is pure if and only if $H^\bullet(j_x^*\underline{M})$ is pure.
\end{lemma}
Hence the cohomology  $H^\bullet_c(u^*L_{1,n}\otimes\cdots \otimes u^*L_{m,n})$ is pure. Since the terms on either side are pure, the connecting homomorphisms of the long exact sequence vanish, and $H^\bullet(L_{1,n}\otimes M_{m,n})$ is also pure. This argument completes the induction step.
\end{proof}

\begin{lemma}
  \label{lem:condensed-les}
  There is an exact sequence
  \[
  H^\bullet(u_!u^*L_{1,n})\otimes_H B_{m,n}\to H^\bullet(L_{1,n})\otimes_H A_{m,n}\to H^\bullet(v_*v^*L_{1,n})\otimes_H A_{m,n}\to 0.
  \]
\end{lemma}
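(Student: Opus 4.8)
The plan is to produce the sequence by tensoring the cohomology sequence of \eqref{eq:maintriangle} for $L_{1,n}$ with $A_{m,n}$ over $H$, and then to correct the leftmost term from $A_{m,n}$ to $B_{m,n}$ using the geometry of the class $c_n$. Since the connecting homomorphisms vanish, \eqref{eq:maintriangle} for $L_{1,n}$ gives a short exact sequence of $H$-modules
\[
0\to H^\bullet(u_!u^*L_{1,n})\xrightarrow{\ \iota\ } H^\bullet(L_{1,n})\to H^\bullet(v_*v^*L_{1,n})\to 0.
\]
Applying the right-exact functor $-\otimes_H A_{m,n}$, I obtain an exact sequence in which the rightmost arrow is exactly the second map of the lemma and is surjective, and in which the kernel of that arrow is the image of the map $\gamma\colon H^\bullet(u_!u^*L_{1,n})\otimes_H A_{m,n}\to H^\bullet(L_{1,n})\otimes_H A_{m,n}$ induced by $\iota$. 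Thus the lemma follows once I show that $\gamma$ factors through the natural surjection $H^\bullet(u_!u^*L_{1,n})\otimes_H A_{m,n}\twoheadrightarrow H^\bullet(u_!u^*L_{1,n})\otimes_H B_{m,n}$: the induced map out of $H^\bullet(u_!u^*L_{1,n})\otimes_H B_{m,n}$ then has the same image as $\gamma$, namely the kernel of the second map, which is precisely the needed exactness. I emphasize that one cannot merely identify the two leftmost terms: already for $m=2$ on $X=\cp^2$ one finds that $H^\bullet(u_!u^*L_{1,n})\otimes_H A_{m,n}$ and $H^\bullet(u_!u^*L_{1,n})\otimes_H B_{m,n}$ differ, so the passage from $A$ to $B$ carries genuine content.

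To obtain the factorization, write $P_1=H^\bullet(u_!u^*L_{1,n})$ and let $R=\ker(A_{m,n}\twoheadrightarrow B_{m,n})$. The surjection $A_{m,n}\to B_{m,n}$ is the tensor product over $H$ of the maps $H^\bullet(L_{j,n})\to H^\bullet(u_*u^*L_{j,n})$ coming from \eqref{eq:othertriangle}, whose kernels are $K_j=H^\bullet(v_!v^!L_{j,n})$; hence the image of $R$ is spanned by elementary tensors having at least one factor in some $K_j$. Since $P_1\otimes_H R\to P_1\otimes_H A_{m,n}\to P_1\otimes_H B_{m,n}\to 0$ is exact, it is enough to check that $\gamma$ kills the image of $P_1\otimes_H R$. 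Here I use two properties of $c_n$. First, $c_n$ acts as zero on every module supported on $X_{n-1}$, because $\overline{X_w^-}\cap X_{n-1}=\varnothing$ forces $c_n|_{X_{n-1}}=0$; in particular $c_n\cdot K_j=0$ for every $j$. Second, $\operatorname{im}\iota\subseteq c_n\cdot H^\bullet(L_{1,n})$, i.e.\ every class in the image of $\iota$ is divisible by $c_n$. Granting these, for $\xi\in P_1$ write $\iota(\xi)=c_n\eta$; then for a generating tensor $a_2\otimes\cdots\otimes a_m$ of $R$ with $a_{j_0}\in K_{j_0}$,
\[
\gamma\bigl(\xi\otimes a_2\otimes\cdots\otimes a_m\bigr)=c_n\eta\otimes a_2\otimes\cdots\otimes a_m=\eta\otimes a_2\otimes\cdots\otimes(c_n a_{j_0})\otimes\cdots\otimes a_m=0,
\]
where I have moved $c_n$ across the tensor product over $H$ into the $j_0$-th slot and used $c_n a_{j_0}=0$. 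By linearity $\gamma$ annihilates the image of $P_1\otimes_H R$, giving the desired factorization and hence exactness.

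The main obstacle is the second property, the divisibility $\operatorname{im}\iota\subseteq c_n\cdot H^\bullet(L_{1,n})$. The reverse inclusion is immediate: $c_n$ kills the quotient $H^\bullet(v_*v^*L_{1,n})=H^\bullet(L_{1,n-1})$ (again supported on $X_{n-1}$), so $c_n\cdot H^\bullet(L_{1,n})\subseteq\operatorname{im}\iota$. For the inclusion I actually need, I plan to identify $\iota$ with a Thom/Gysin map: using the interpretation of $c_n$ as the Thom class of the normal bundle to $\overline{X_w^-}$ together with the contracting neighbourhood of Assumption~\ref{assum:contraction}, the extension map $H^\bullet(u_!u^*L_{1,n})=H^\bullet_c(U_n,u^*L_{1,n})\to H^\bullet(L_{1,n})$ should factor, via the Thom isomorphism over the stratum, as cup product with $c_n$, so that its image lies in $c_n\cdot H^\bullet(L_{1,n})$. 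Pinning down this Thom-class description of $\iota$ precisely—where purity and Assumption~\ref{assum:contraction} enter—is the technical heart of the argument.
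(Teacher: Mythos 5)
Your proposal follows the paper's own proof essentially step for step: tensor the cohomology sequence of \eqref{eq:maintriangle} for $L_{1,n}$ with $A_{m,n}$ over $H$, then show the resulting first map factors through the natural surjection onto $H^\bullet(u_!u^*L_{1,n})\otimes_H B_{m,n}$ by checking it annihilates elementary tensors having a factor in some $H^\bullet(v_!v^!L_{j,n})$, using that $c_n$ kills such factors while the image of $H^\bullet(u_!u^*L_{1,n})$ in $H^\bullet(L_{1,n})$ lies in $c_nH^\bullet(L_{1,n})$, and moving $c_n$ across the tensor product over $H$. The two properties of $c_n$ you isolate---including the divisibility statement you flag as the remaining technical heart---are precisely the facts the paper itself imports from \cite{ginsburg}, so your argument is complete at the same level of rigor as the paper's.
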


\begin{proof}
Consider the distinguished triangle \eqref{eq:maintriangle} for the sheaf $L_{1,n}$. Taking cohomology and applying the functor $-\otimes_HA_{m,n}$, we obtain the right-exact sequence
\[
H^\bullet(u_!u^*L_{1,n})\otimes_H A_{m,n}\stackrel{f}{\too} H^\bullet(L_{1,n})\otimes_HA_{m,n} \stackrel{g}{\too} H^\bullet(v_*v^*L_{1,n})\otimes_H A_{m,n}\to 0.
\]
Using the distinguished triangles \eqref{eq:othertriangle} for each of the sheaves $L_{j,n}$ for $j\geq 2$, we have surjective morphisms
\[
H^\bullet(L_{j,n})\surj H^\bullet(u_*u^*L_{j,n}).
\]
Taking the tensor product of all of these along with $H^\bullet(u_!u^*L_{1,n})$, we obtain a surjective morphism
\[
H^\bullet(u_!u^*L_{1,n})\otimes_HA_{m,n}\stackrel{h}{\surj} H^\bullet(u_!u^*L_{1,n})\otimes_H B_{m,n}.
\]

We now show that the map $f$ factors through the map $h$, by showing that $f(\ker h) = 0$. Since all boundary maps in the cohomology long exact sequence of the triangles \eqref{eq:othertriangle} vanish, the following set generates $\ker h$:
\[
\{a_1\otimes a_2\otimes \cdots \otimes a_n\mid a_j\in H^\bullet(v_*v^!L_{j,n})\text{ for some } 2\leq j\leq m \}.
\]
Consider any element $a_1\otimes a_2\otimes\cdots \otimes a_n\in \ker h$. Suppose that $a_j\in H^\bullet(v_*v^!L_{j,n})$. From \cite{ginsburg}, we know that $c_na_j = 0$, and that $a_1 \in c_nH^\bullet(L_{1,n})$. Since all tensor products are over $H$, the image of $h(a_1\otimes\cdots\otimes a_n)$ under $f$ must be zero. Therefore $f$ factors through $h$, and we obtain the desired short exact sequence.
\end{proof}

Set $M_{m,n}= L_{2,n}\otimes\cdots \otimes L_{m,n}$. Putting together the exact sequences from \autoref{lem:condensed-les} and \autoref{lem:purity-ses}, we obtain the following commutative diagram, where the vertical maps are cup product maps.
\begin{equation}
  \label{eq:big-comm-diagram}
  \begin{gathered}
    \begin{tikzpicture}[column sep=2em, row sep=2em]
      \matrix(m)[matrix of math nodes]
      {&H^\bullet(u_!u^*L_{1,n})\otimes_H B_{m,n}& H^\bullet(L_{1,n})\otimes_H A_{m,n} & H^\bullet(v_*v^*L_{1,n})\otimes_H A_{m,n}&0\\
        0&H^\bullet(u_!u^*L_{1,n}\otimes M_{m,n})& H^\bullet(L_{1,n}\otimes M_{m,n})& H^\bullet(v_*v^*L_{1,n}\otimes M_{m,n})&0\\};
      \path[->, font=\scriptsize]
      (m-1-2) edge (m-1-3) edge node[auto] {$a$} (m-2-2)
      (m-1-3) edge node[auto] {$b$} (m-2-3) edge (m-1-4)
      (m-1-4) edge (m-1-5) edge node[left] {$c$} (m-2-4)
      (m-2-1) edge (m-2-2)
      (m-2-2) edge (m-2-3)
      (m-2-3) edge (m-2-4)
      (m-2-4) edge (m-2-5);
    \end{tikzpicture}
  \end{gathered}
\end{equation}

The following two lemmas prove that the map $a$ is an isomorphism.
\begin{lemma}
  \label{lem:mapa-twofactors}
  Let $F$ be any sheaf supported on $U = X_n\backslash X_{n-1}$. Let $M$ be the restriction to $X_n$ of some IC sheaf $\ic_x$. Then the multiplication map
  \[
  H^\bullet(u_!F)\otimes_H H^\bullet(u_*u^*M)\to H^\bullet(u_!F\otimes u_*u^*M)
  \]
  is an isomorphism.
\end{lemma}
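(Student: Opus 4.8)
\emph{The plan is to} reduce the statement to a Künneth-type isomorphism on the single stratum $U$, using crucially that $U = U_n = X_w$ is a contractible affine space. First I would rewrite the target of the map via the projection formula. Since $u$ is an open inclusion we have $u^*u_* = \mathrm{id}$, so a repeated application of the projection formula (as already used above) gives
\[
u_!F\otimes u_*u^*M \cong u_!\bigl(F\otimes u^*u_*u^*M\bigr) = u_!\bigl(F\otimes u^*M\bigr).
\]
Taking cohomology and using the identifications $H^\bullet(u_!G) = H^\bullet_c(U,G)$ and $H^\bullet(u_*u^*M) = H^\bullet(U,u^*M)$, the map in the statement becomes the cup-product map
\[
H^\bullet_c(U,F)\otimes_H H^\bullet(U,u^*M)\to H^\bullet_c\bigl(U,F\otimes u^*M\bigr).
\]

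Next I would identify $u^*M$. Since $M = i_n^*\ic_x$ and $U$ is a single stratum, $u^*M$ is the restriction of an IC sheaf to one stratum, and therefore has locally constant cohomology sheaves. As $U$ is a simply connected affine space, these local systems are constant, so $u^*M$ is isomorphic to a constant sheaf $\underline{V}$ for some graded vector space $V$ (a direct sum of shifted constant sheaves). Consequently $H^\bullet(U,u^*M) = V$ (the stalk at any point), and the projection formula with a constant sheaf gives $H^\bullet_c(U,F\otimes u^*M)\cong H^\bullet_c(U,F)\otimes_{\bbC} V$.

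The heart of the argument is the analysis of the $H$-module structures, and here is where contractibility of $U$ enters. Because $U$ is contractible, $H^{>0}(U) = 0$, so the restriction map $H = H^\bullet(X)\to H^\bullet(U) = \bbC$ kills $H^{>0}$. Via the projection formula, the $H$-action on $H^\bullet_c(U,F) = H^\bullet(u_!F)$ is given by $\alpha\cdot\beta = u^*\alpha\cup\beta$, and the $H$-action on $V = H^\bullet(u_*u^*M)$ likewise factors through $u^*$; hence $H^{>0}$ annihilates both factors. Since $H\to\bbC$ is surjective, tensoring over $H$ agrees with tensoring over $\bbC$ for such modules, and the map reduces to the natural isomorphism $H^\bullet_c(U,F)\otimes_{\bbC} V\to H^\bullet_c(U,F)\otimes_{\bbC} V$.

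\emph{The hard part} will not be any single computation but rather the bookkeeping of compatibilities: verifying that the abstractly defined multiplication map agrees, under the projection-formula identifications, with this Künneth isomorphism, and checking that both $H$-module structures genuinely factor through the restriction $u^*\colon H\to H^\bullet(U)$. Once these compatibilities are established, the contractibility of the single stratum $U$ forces $H^{>0}$ to act trivially and collapses the $H$-tensor product to an ordinary $\bbC$-tensor product, from which the isomorphism is immediate.
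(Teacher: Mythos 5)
Your proposal is correct, and it rests on the same two pillars as the paper's own proof: the projection formula, used to move everything onto $U$, and the fact that $u^*M$ is a direct sum of shifted constant sheaves on the contractible cell $U$. The difference is one of execution rather than of strategy. The paper never names the constant-sheaf structure explicitly; it instead runs a diagram chase built from the functoriality of the projection formula and the counit of the adjunction $\pi^*\dashv\pi_*$, and the constancy of $u^*M$ enters only through the claim that $u^*\pi^*\pi_*u_*u^*M\to u^*u_*u^*M$ is an isomorphism --- which is precisely your statement that the constant sheaf with fiber $H^\bullet(U,u^*M)$ maps isomorphically onto $u^*M$. That diagram chase is exactly the ``bookkeeping of compatibilities'' you flag as the hard part and defer, so the paper's proof essentially completes your outline. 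Conversely, your explicit change-of-rings step (both module structures factor through $u^*\colon H\to H^\bullet(U)=\bbC$, so $\otimes_H$ collapses to $\otimes_{\bbC}$) makes precise a point the paper passes over: its diagram shows the $\bbC$-linear cup product is an isomorphism, and the $\otimes_H$ statement follows only because that map factors through the surjection onto the $H$-tensor product. One small caution: locally constant cohomology sheaves on a simply connected space are constant, but to split $u^*M$ as a direct sum of shifted constant sheaves you also need vanishing of the relevant Ext groups; this holds here because $H^{>0}(U)=0$, and it is the same standard fact the paper invokes when it recalls that $u^*L_{1,n}\otimes\cdots\otimes u^*L_{m,n}$ is a direct sum of shifted constant sheaves.
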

\begin{proof}
    Consider the following commutative diagram, where $\sigma$ and $\pi$ are projections to a point.
  \begin{center}
    \begin{tikzpicture}[row sep=2em, column sep=2em]
      \matrix(m)[matrix of math nodes]
      {U & X_n\\
        & \operatorname{pt}\\};
      \path[->, font=\scriptsize]
      (m-1-1) edge node[above] {$u$} (m-1-2) edge node[left] {$\sigma$} (m-2-2)
      (m-1-2) edge node[auto] {$\pi$} (m-2-2);
    \end{tikzpicture}
  \end{center}
  We may write the following:
  \begin{align*}
    H^\bullet(u_!F) &= \pi_*(u_!F) = \sigma_!F,\\
    H^\bullet(u_*u^*M) &= \pi_*(u_*u^*M) = \sigma_*u^*M.
  \end{align*}
  We now use the projection formula for the maps $\pi$ and $\sigma$. The projection formula isomorphism is functorial and commutes with compositions of maps. Hence we obtain the following commutative diagram.
  \begin{equation}
    \label{eq:adj-chase-diagram}
    \begin{gathered}
      \begin{tikzpicture}[row sep=2em, column sep=3em]
        \matrix(m)[matrix of math nodes]
        {\pi_*(u_!F)\otimes \pi_*(u_*u^*M)& \sigma_!(F)\otimes \sigma_*(u^*M)\\
          \pi_*(u_!F\otimes \pi^*\pi_*u_*u^*M)& \sigma_!(F\otimes \sigma^*\sigma_*(u^*M))\\
          \pi_*(u_!F\otimes u_*u^*M)& \sigma_!(F\otimes u^*u_*u^*M)\\};
        \path[->, font=\scriptsize]
        (m-1-1) edge node[above] {$=$} (m-1-2) edge node[left] {proj.} node[right] {$\cong$} (m-2-1)
        (m-1-2) edge node[left] {proj.} node[right] {$\cong$} (m-2-2)
        (m-2-1) edge node[above] {$\cong$} node[below] {proj.} (m-2-2) edge node[left] {adj.} (m-3-1)
        (m-2-2) edge node[left] {adj.} node[right] {$\cong$} (m-3-2)
        (m-3-1) edge node[below] {proj.} node[above] {$\cong$} (m-3-2);        
      \end{tikzpicture}
    \end{gathered}
  \end{equation}
  The maps marked as `proj.' are all isomorphisms arising from applications of the projection formula. The maps marked as `adj.' are obtained from the counit map of the adjunction between $\pi^*$ and $\pi_*$. The right hand vertical map marked as `adj.' can be rewritten as follows:
  \[
  \sigma_!(F\otimes u^*\pi^*\pi_*u_*u^*M)\to \sigma_!(F\otimes u^*u_*u^*M).
  \]
  This map is an isomorphism because the map $u^*\pi^*\pi_*u_*u^*M\to u^*u_*u^*M$ is an isomorphism. Hence the composition of the right hand vertical maps is an isomorphism. From the diagram it is clear that the composition of the left hand vertical maps is an isomorphism as well. Since this map is simply the cup product map on cohomology, the lemma is proved.

\end{proof}

\begin{lemma}
  \label{lem:mapa-isomorphism}
  The multiplication map
  \[
  a\colon H^\bullet(u_!u^*L_{1,n})\otimes_H B_{m,n}\to H^\bullet(u_!u^*L_{1,n}\otimes M_{m,n})
  \]
  is an isomorphism.
\end{lemma}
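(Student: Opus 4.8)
The plan is to deduce this multi-factor statement from the two-factor case, \autoref{lem:mapa-twofactors}, by induction on $m$. The engine is the following support observation: because $u_!u^*L_{1,n}$ is extended by zero from the open stratum $U$, tensoring any complex of the form $u_!F$ with $L_{j,n}$ has exactly the same effect as tensoring it with $u_*u^*L_{j,n}$. Indeed, the projection formula gives $u_!F\otimes L_{j,n}\cong u_!(F\otimes u^*L_{j,n})$ and $u_!F\otimes u_*u^*L_{j,n}\cong u_!(F\otimes u^*u_*u^*L_{j,n})$, and these agree since $u^*u_* \cong \mathrm{id}$ forces $u^*u_*u^*L_{j,n}=u^*L_{j,n}$. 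This lets me pass freely between the sheaves $L_{j,n}$ appearing in the target $H^\bullet(u_!u^*L_{1,n}\otimes M_{m,n})$ and the sheaves $u_*u^*L_{j,n}$ whose cohomologies make up $B_{m,n}$.

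For the base case $m=2$, I would set $F=u^*L_{1,n}$, a complex on $U$, and $M=L_{2,n}=i_n^*\ic_{w_2}$, the restriction of an IC sheaf. The support observation rewrites the target as $H^\bullet(u_!F\otimes u_*u^*M)$, and \autoref{lem:mapa-twofactors} asserts precisely that $H^\bullet(u_!F)\otimes_HH^\bullet(u_*u^*M)\to H^\bullet(u_!F\otimes u_*u^*M)$ is an isomorphism. One checks that, under this identification, that map coincides with $a$, both being the cup product.

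For the inductive step, assume the result for $m-1$ factors. I would group the first $m-1$ tensor factors, using the projection formula repeatedly to write $u_!u^*L_{1,n}\otimes L_{2,n}\otimes\cdots\otimes L_{m-1,n}\cong u_!F'$ with $F'=u^*L_{1,n}\otimes\cdots\otimes u^*L_{m-1,n}$ a complex on $U$. The inductive hypothesis then identifies $H^\bullet(u_!u^*L_{1,n})\otimes_HB_{m-1,n}\cong H^\bullet(u_!F')$. Applying \autoref{lem:mapa-twofactors} with this $F'$ and $M=L_{m,n}$, together with the support identification $u_!F'\otimes u_*u^*L_{m,n}\cong u_!F'\otimes L_{m,n}=u_!u^*L_{1,n}\otimes M_{m,n}$, yields
\[
H^\bullet(u_!F')\otimes_HH^\bullet(u_*u^*L_{m,n})\stackrel{\cong}{\too}H^\bullet(u_!u^*L_{1,n}\otimes M_{m,n}).
\]
Since $B_{m-1,n}\otimes_HH^\bullet(u_*u^*L_{m,n})=B_{m,n}$ and the cup products compose, splicing the two isomorphisms shows that $a$ is an isomorphism.

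The step I expect to be the main obstacle is not any single computation but the bookkeeping around the hypotheses of \autoref{lem:mapa-twofactors}: I must check that the complex $F'$ produced at each stage genuinely lives on $U$ (so that its extension by zero behaves as in the two-factor proof), which is immediate since $u^*$ of anything is supported on $U$; and, more delicately, that the various cup-product maps are compatible with the projection-formula and adjunction identifications, so that the composite really is the map $a$ and not merely an abstract isomorphism. The latter is a diagram chase using functoriality of the projection formula, of exactly the kind carried out in the proof of \autoref{lem:mapa-twofactors}.
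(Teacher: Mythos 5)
Your proposal is correct and follows essentially the same route as the paper: induction on $m$ with \autoref{lem:mapa-twofactors} as the base case, using the projection formula to see that $u_!u^*L_{1,n}\otimes M_{m-1,n}$ is supported on $U$ so that \autoref{lem:mapa-twofactors} applies again with $M = L_{m,n}$ in the inductive step. Your explicit justification that $u_!F\otimes L_{j,n}\cong u_!F\otimes u_*u^*L_{j,n}$ via $u^*u_*\cong\mathrm{id}$ is a point the paper leaves implicit, and making it is a mild improvement rather than a deviation.
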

\begin{proof}[Proof of lemma]
  We prove this by induction on $m$. The case of $m = 2$ is covered in \autoref{lem:mapa-twofactors}. Now suppose that 
  \[
  H^\bullet(u_!u^*L_{1,n})\otimes_HB_{m-1,n}\stackrel{\cong}{\too} H^\bullet(u_!u^*L_{1,n}\otimes M_{m-1,n})
  \]
  via the multiplication map. Tensoring with $H^\bullet(u_*u^*L_{m,n})$, we see that
  \begin{equation}
    \label{eq:induction-map1}
    H^\bullet(u_!u^*L_{1,n})\otimes_HB_{m,n}\stackrel{\cong}{\too} H^\bullet(u_!u^*L_{1,n}\otimes M_{m-1,n})\otimes_HH^\bullet(u_*u^*L_{m,n}).
  \end{equation}
  By the projection formula, $\left(u_!u^*L_{1,n}\otimes M_{m-1,n}\right)$ is isomorphic to $u_!(u^*L_{1,n}\otimes u^*M_{m-1,n})$, and hence it is supported on $U$. Now by \autoref{lem:mapa-twofactors}, we see that 
  \begin{equation}
    \label{eq:induction-map2}
    H^\bullet(u_!u^*L_{1,n}\otimes M_{m-1,n})\otimes_HH^\bullet(u_*u^*L_{m,n}) \stackrel{\cong}{\too} H^\bullet(u_!u^*L_{1,n}\otimes M_{m,n}).
  \end{equation}
  By composing \eqref{eq:induction-map1} and \eqref{eq:induction-map2}, we see that
  \[
  H^\bullet(u_!u^*L_{1,n})\otimes_H B_{m,n}\stackrel{\cong}{\too} H^\bullet(u_!u^*L_{1,n}\otimes M_{m,n})
  \]
  via the multiplication map $a$.
\end{proof}

The next lemma uses the induction hypothesis to prove that the map $c$ is an isomorphism.
\begin{lemma}
  \label{lem:mapc-isomorphism}
  The multiplication map
  \[
  c\colon H^\bullet(v_*v^*L_{1,n})\otimes_HH^\bullet(M_{m,n})\to H^\bullet(v_*v^*L_{1,n}\otimes M_{m,n})
  \]
\end{lemma}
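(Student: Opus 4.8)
To show that $c$ is an isomorphism, the plan is to factor it through restriction to the closed stratum $X_{n-1}$ and then appeal to the inductive hypothesis on $n$. Since $v\colon X_{n-1}\hookrightarrow X_n$ is a closed inclusion and $v^*L_{j,n}=L_{j,n-1}$, the projection formula gives
\[
v_*v^*L_{1,n}\otimes M_{m,n}\cong v_*\bigl(v^*L_{1,n}\otimes v^*M_{m,n}\bigr)=v_*(L_{1,n-1}\otimes M_{m,n-1}),
\]
so that $H^\bullet(v_*v^*L_{1,n})=H^\bullet(L_{1,n-1})$ and $H^\bullet(v_*v^*L_{1,n}\otimes M_{m,n})=H^\bullet(L_{1,n-1}\otimes M_{m,n-1})$. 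Because the target is the cohomology of a sheaf supported on $X_{n-1}$, the second tensor factor enters the cup product only through its restriction $\rho\colon H^\bullet(M_{m,n})\to H^\bullet(M_{m,n-1})$, so $c$ factors as
\[
H^\bullet(L_{1,n-1})\otimes_H H^\bullet(M_{m,n})\xrightarrow{\ \mathrm{id}\otimes\rho\ }H^\bullet(L_{1,n-1})\otimes_H H^\bullet(M_{m,n-1})\xrightarrow{\ \mu\ }H^\bullet(L_{1,n-1}\otimes M_{m,n-1}),
\]
where $\mu$ is the two-factor cup product on $X_{n-1}$. It then suffices to prove that $\mu$ and $\mathrm{id}\otimes\rho$ are each isomorphisms.

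For $\mu$, I would invoke \autoref{prop:mainprop} at level $n-1$ (the inductive hypothesis) for the two tuples $(w_1,\dots,w_m)$ and $(w_2,\dots,w_m)$. The first yields that the full multiplication $H^\bullet(L_{1,n-1})\otimes_H\cdots\otimes_H H^\bullet(L_{m,n-1})\to H^\bullet(L_{1,n-1}\otimes\cdots\otimes L_{m,n-1})$ is an isomorphism, and the second that $H^\bullet(L_{2,n-1})\otimes_H\cdots\otimes_H H^\bullet(L_{m,n-1})\to H^\bullet(M_{m,n-1})$ is an isomorphism. Tensoring the latter with $H^\bullet(L_{1,n-1})$ over $H$ gives an isomorphism, and comparing with the former via associativity of the cup product exhibits $\mu$ as a composite of isomorphisms.

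The main obstacle is showing that $\mathrm{id}\otimes\rho$ is an isomorphism. Applying \autoref{lem:purity-ses}, part (\ref{item:purity-ses}), to the tuple $(w_2,\dots,w_m)$ gives the short exact sequence
\[
0\to H^\bullet(u_!u^*M_{m,n})\to H^\bullet(M_{m,n})\xrightarrow{\ \rho\ }H^\bullet(M_{m,n-1})\to 0,
\]
so $\rho$ is surjective and $\ker\rho$ is the image of $H^\bullet(u_!u^*M_{m,n})$ in $H^\bullet(M_{m,n})$. By the structural result of \cite{ginsburg} used in the proof of \autoref{lem:condensed-les}, applied to the sheaf $M_{m,n}$ (whose restriction to $U_n$ is again a direct sum of shifted constant sheaves), this image is contained in $c_nH^\bullet(M_{m,n})$. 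The key point is that $c_n$ acts as zero on $H^\bullet(L_{1,n-1})$: the $H$-module structure factors through $i_{n-1}^*\colon H\to H^\bullet(X_{n-1})$, and $i_{n-1}^*c_n=0$ because $\overline{X_w^-}$ meets $X_n$ only at the point $w\notin X_{n-1}$, so $\overline{X_w^-}\cap X_{n-1}=\emptyset$. Since all tensor products are over $H$, any generator $\ell\otimes c_nx$ of the image of $H^\bullet(L_{1,n-1})\otimes_H\ker\rho$ satisfies $\ell\otimes c_nx=(c_n\ell)\otimes x=0$. By right-exactness of $H^\bullet(L_{1,n-1})\otimes_H-$, the map $\mathrm{id}\otimes\rho$ is therefore injective, and it is surjective because $\rho$ is; hence it is an isomorphism. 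Composing with $\mu$ shows that $c$ is an isomorphism.
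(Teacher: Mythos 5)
Your proof is correct and follows essentially the same route as the paper: the projection formula to push everything down to $X_{n-1}$, the result from \cite{ginsburg} identifying $\ker\rho$ with $c_n$-multiples combined with the fact that $c_n$ annihilates $H^\bullet(L_{1,n-1})$, and the induction hypothesis at level $n-1$. The paper compresses your factorization $c=\mu\circ(\mathrm{id}\otimes\rho)$ into the single statement that the cokernel of $c_n$ on $H^\bullet(M_{m,n})$ is $H^\bullet(M_{m,n-1})$, and it glosses over a point you handle carefully: that the induction hypothesis must be invoked for both the $m$-tuple and the $(m-1)$-tuple (plus associativity of the cup product) to conclude that the two-factor map $\mu$ is an isomorphism.
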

\begin{proof}[Proof of lemma]
First observe that $v^*L_{1,n} = L_{1,n-1}$ and $v^*M_{m,n} = M_{m,n-1}$. Using the projection formula, we have
\[
v_*v^*L_{1,n}\otimes M_{m,n} \cong v_*(L_{1,n-1}\otimes v^*M_{m,n}) = v_*(L_{1,n-1}\otimes M_{m,n-1}).
\]
Next, the element $c_n\in H$ acts on $H^\bullet(v_*L_{1,n-1})$ by zero, since $L_{1,n-1}$ is supported on $X_{n-1}$. Recall from \cite{ginsburg} that the cokernel of $c_n$ on $H^\bullet(M_{m,n})$ is just $H^\bullet(M_{m,n-1})$. Hence
\[
H^\bullet(v_*v^*L_{1,n})\otimes_H H^\bullet(M_{m,n})\cong H^\bullet(L_{1,n-1})\otimes_H H^\bullet(M_{m,n-1}).
\]
Hence the map $c$ can be rewritten as the multiplication map
\[
H^\bullet(L_{1,n-1})\otimes_HH^\bullet(M_{m,n-1})\to H^\bullet(L_{1,n-1}\otimes M_{m,n-1}),
\]
which is an isomorphism by the induction hypothesis.
\end{proof}

Since the maps $a$ and $c$ from \eqref{eq:big-comm-diagram} are isomorphisms, we conclude by the snake lemma that the middle map $b$ is an isomorphism as well. Hence the induction step is proved.

\section{Computation of equivariant cohomology}
\label{sec:mainproof-equivariant}
Consider a smooth complex projective variety $X$ with the same assumptions as before. The goal of this section is to prove \autoref{thm:maintheorem-equivariant}. 

First we recall some constructions in equivariant cohomology. The main references are \cite{bernstein-lunts} and \cite{gkm}. Since each stratum $X_w$ is a locally closed $T$-invariant affine subset of $X$, the trivial local system on $X_w$ gives rise to the $T$-equivariant IC sheaf $\ic_w$ (see, e.g., \cite[Section 5.2]{bernstein-lunts}).
Consider the following diagram, where the map $\pr$ is the second projection, and the map $q$ is the quotient by the diagonal $T$-action.
\begin{center}
  \begin{tikzpicture}[row sep=2em, column sep=2em]
    \matrix(m)[matrix of math nodes]
    {
      &ET \times X&\\
      X&&ET\times_T X\\
    };
    \path[->, font=\scriptsize]
    (m-1-2) edge node[above left] {pr} (m-2-1) edge node[above right] {$q$} (m-2-3);
  \end{tikzpicture}
\end{center}
More precisely, the $T$-equivariant IC sheaf may be described as a triple $(\ic_w, \overline{\ic_w}, \beta)$, where $\ic_w$ is the IC sheaf on $X$ corresponding to $X_w$ and $\overline{\ic_w}$ is a canonically-defined sheaf on $ET\times_T X$, together with an isomorphism $\beta\colon \pr^*\ic_w\stackrel{\cong}{\too} q^*\overline{\ic_w}$ (see, e.g., \cite{bernstein-lunts}).

The equivariant cohomology ring of $X$ is denoted $H_T = H^\bullet_T(X)$. In our case, $H_T$ is isomorphic to to $H^\bullet(X)\otimes H^\bullet(BT)$ (see, e.g., \cite[Theorem 14.1]{gkm}). For any $T$-equivariant sheaf on $X$, its $T$-equivariant cohomology is a module over $H_T$.

If $Y$ is a closed $T$-equivariant submanifold of $X$, we can construct its equivariant Poincar\'e dual as follows. The normal bundle $E_Y$ of $Y$ is a $T$-equivariant vector bundle. Hence there is an equivariant Thom class $\tau_Y\in H^\bullet_{T,c}(E_Y)$. By the equivariant tubular neighborhood theorem, $E_Y$ can be equivariantly embedded inside $X$. Then the extension by zero of $\tau_Y$ under this embedding is a class in $H^\bullet_T(X)$, which is called the equivariant Poincar\'e dual of $Y$.

Since the submanifolds $\overline{X_w^-}$ are closed and $T$-equivariant, we can construct their equivariant Poincar\'e duals. If $w$ is the unique $T$-fixed point in $X_n\backslash X_{n-1}$, let $\widetilde{c}_n\in H^\bullet_T(X)$ denote the equivariant Poincar\'e dual to $\overline{X_w^-}$. Each class $\widetilde{c}_n$ restricts to the previously defined class $c_n$ under the map $H_T\to H^\bullet(X)$, hence the collection $\{\widetilde{c}_n\}$ generates $H_T$ over $H^\bullet(BT)$.

Recall that $X_n$ intersects $\overline{X_w^-}$ transversally in the point $w$, and that $\widetilde{c}_n$ is the Thom class of the normal bundle of $\overline{X_w^-}$. Hence the restriction of $\widetilde{c}_n$ to $X_n$ is the image in $H_T^\bullet(X_n)$ of a generator of the local cohomology group $H^\bullet_T(X_n, X_n\backslash \{w\})$. Set $U_n = X_n \backslash X_{n-1}$. Since $w\in U_n$, we have  $H^{\bullet}_T(X_n, X_n\backslash\{w\}) \cong H^{\bullet}_T(U_n, U_n\backslash\{w\})$ by excision. But $U_n$ is an affine space that is $T$-equivariantly contractible to $w$, and hence $H^{\bullet}_T(U_n, U_n\backslash\{w\}) \cong H^\bullet_{T,c}(U_n)$. Moreover, multiplication by $\widetilde{c}_n$ annihilates the cohomology of any sheaf supported on $X_{n-1}$. Hence the map of multiplying by $\widetilde{c}_n$ on $H^\bullet_T(X_n)$ factors through $H^\bullet_T(U_n)$, and sends its generator to a generator of $H^\bullet_{T,c}(U_n)\subset H_T^\bullet(X_n)$. an isomorphism $H^\bullet_T(U_n)\cong H^\bullet_{T,c}(U_n)$.

Now let $L$ be any complex on $X_n$ that breaks up as a direct sum of constant sheaves when restricted to $U_n$. Let $u\colon U_n\to X_n$ be the inclusion. By the same argument, we obtain the following commutative diagram, which is an analogue of \cite[3.8a]{ginsburg}:
\begin{center}
  \begin{tikzpicture}[row sep=2em, column sep=2em]
    \matrix(m)[matrix of math nodes]
    {
      H^\bullet_T(L)& H^\bullet_T(u^*L)\\
      H^\bullet_T(L)& H^\bullet_{T,c}(u^*L)\\
    };
    \path[->, font=\scriptsize]
    (m-1-1) edge[->>] (m-1-2) edge node[left] {$\widetilde{c}_n$} (m-2-1)
    (m-1-2) edge node[left] {$\widetilde{c}_n$} node[right] {$\cong$}(m-2-2)
    (m-2-2) edge[left hook->] (m-2-1);
  \end{tikzpicture}
\end{center}

Now consider an $m$-tuple $(w_1,\ldots,w_m)$ of $T$-fixed points of $X$. Then $\ic_{w_1},\ldots,\ic_{w_m}$ are the IC sheaves on $X_{w_1},\ldots,X_{w_m}$ respectively. Let $L_{j,n} = i_n^*\ic_{w_j}$ for each $j$ and $n$. 
\begin{proposition}
  Under the assumptions \ref{assum:stratification} and \ref{assum:contraction}, the natural maps
  \[
  H^\bullet_T(L_{1,n})\otimes_{H_T}\cdots \otimes_{H_T}H_T^\bullet(L_{m,n})\to H^\bullet_T(L_{1,n}\otimes\cdots\otimes L_{m,n})
  \]
  are isomorphisms for each $n$.
\end{proposition}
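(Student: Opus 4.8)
The plan is to run the induction of \autoref{sec:mainproof} essentially verbatim, with ordinary cohomology $H^\bullet(-)$ replaced by equivariant cohomology $H^\bullet_T(-)$, the ring $H$ replaced by $H_T$, and the class $c_n$ replaced by its equivariant lift $\widetilde{c}_n$. I would induct on the Morse filtration index $n$. In the base case $n = 0$ the space $X_0$ is the single $T$-fixed point of $U_0$, so $H_T = H^\bullet_T(X_0) = H^\bullet(BT)$ and each $L_{j,0}$ is a shift of the constant sheaf; then $H^\bullet_T(L_{j,0})$ is free of rank one over $H_T$ and the map degenerates to the canonical isomorphism $H_T\otimes_{H_T}\cdots\otimes_{H_T}H_T\stackrel{\cong}{\too} H_T$.

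For the induction step I would apply $H^\bullet_T(-)$ to the two distinguished triangles \eqref{eq:maintriangle} and \eqref{eq:othertriangle}, which are triangles of $T$-equivariant complexes because $u$ and $v$ are inclusions of $T$-invariant subsets. This yields the equivariant analogues of \autoref{lem:purity-ses} and \autoref{lem:condensed-les}, and hence the equivariant version of the commutative diagram \eqref{eq:big-comm-diagram}, whose outer rows are exact and whose vertical maps $a$, $b$, $c$ are equivariant cup products over $H_T$. As before it suffices to show that $a$ and $c$ are isomorphisms and then invoke the snake lemma to conclude that $b$ is one. For $a$ I would reprove \autoref{lem:mapa-twofactors} and \autoref{lem:mapa-isomorphism} equivariantly: the projection-formula and adjunction chase of \eqref{eq:adj-chase-diagram} is purely formal and transfers to the equivariant derived category, its only geometric input being that $u^*L_{j,n}$ is a direct sum of shifted constant sheaves on $U_n$. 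This is precisely the situation of the commutative square relating $H^\bullet_T(L)$, $H^\bullet_T(u^*L)$, and $H^\bullet_{T,c}(u^*L)$ through multiplication by $\widetilde{c}_n$ that was established above, which supplies the equivariant replacement for \cite[3.8a]{ginsburg}.

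For the map $c$ I would argue as in \autoref{lem:mapc-isomorphism}: the identities $v^*L_{1,n} = L_{1,n-1}$ and $v^*M_{m,n} = M_{m,n-1}$ together with the projection formula rewrite the target of $c$ as $H^\bullet_T(v_*(L_{1,n-1}\otimes M_{m,n-1}))$; the class $\widetilde{c}_n$ acts by zero on the equivariant cohomology of any sheaf supported on $X_{n-1}$, and the cokernel of multiplication by $\widetilde{c}_n$ on $H^\bullet_T(M_{m,n})$ is $H^\bullet_T(M_{m,n-1})$. Thus $c$ is identified with the multiplication map $H^\bullet_T(L_{1,n-1})\otimes_{H_T}H^\bullet_T(M_{m,n-1})\to H^\bullet_T(L_{1,n-1}\otimes M_{m,n-1})$, an isomorphism by the induction hypothesis.

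The delicate point, and the one I expect to be the main obstacle, is the equivariant analogue of \autoref{lem:purity-ses}: that the relevant equivariant cohomology groups are pure and that all connecting homomorphisms in the equivariant long exact sequences vanish, so that the two rows of the diagram are genuinely short exact. I would handle this by exploiting equivariant formality: each $H^\bullet_T(\frakL)$ in sight is free over $H^\bullet(BT)$ with $H^\bullet_T(\frakL)\otimes_{H^\bullet(BT)}\bbC\cong H^\bullet(\frakL)$, so reducing modulo the augmentation ideal of $H^\bullet(BT)$ recovers the non-equivariant long exact sequences of \autoref{sec:mainproof}, whose connecting maps already vanish; freeness over $H^\bullet(BT)$ then lifts the vanishing back to the equivariant level. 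An alternative would be to approximate $ET$ by finite-dimensional smooth projective varieties $ET_N$, so that each $ET_N\times_T X$ is smooth projective, apply the purity statement of \autoref{lem:purity-ses} on these approximations, and pass to the limit. Once short exactness is secured, the snake lemma completes the induction step, and taking $n = \dim X$ proves \autoref{thm:maintheorem-equivariant}.
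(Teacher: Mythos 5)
Your overall skeleton (induction on $n$, the two distinguished triangles, the equivariant version of diagram \eqref{eq:big-comm-diagram}, isomorphy of $a$ and $c$ plus the snake lemma, and the identification of $c$ via $\widetilde{c}_n$) is exactly the paper's, and your base case and treatment of $c$ are fine. But both of the places you flag as needing work contain genuine gaps, and they sit precisely where the paper does its real equivariant labor. First, the claim that the chase of \eqref{eq:adj-chase-diagram} is ``purely formal and transfers to the equivariant derived category'' overlooks where the non-equivariant proof of \autoref{lem:mapa-twofactors} actually terminates: the chase produces an isomorphism of complexes over a \emph{point}, and the identification of the cohomology of a derived tensor product with the tensor product of cohomologies is then the K\"unneth theorem over the field $\bbC$. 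Equivariantly the role of the point is played by $BT$: the same chase (over $ET\times_T X_n\to BT$ and $ET\times_T U\to BT$) only yields an isomorphism between objects of the derived category of $BT$, and passing from hypercohomology of a derived tensor product of complexes on $BT$ to a tensor product of modules over $H^\bullet(BT)$ is exactly the non-formal step --- it requires knowing that the relevant pushforwards to $BT$ are direct sums of shifted constant sheaves, equivalently that their hypercohomology is free over $H^\bullet(BT)$ with no Tor correction. (Note the target statement really is a K\"unneth isomorphism over $H^\bullet(BT)$, not over a field, since the $H_T$-action on these modules factors through $H_T\to H^\bullet_T(U)\cong H^\bullet(BT)$.) Supplying this input is the entire content of the paper's proof of \autoref{lem:mapa-twofactors-equivariant}, which argues on $E_2$-pages of the Leray spectral sequences and uses freeness over $H^\bullet(BT)$ to pass to the abutment; your version can be repaired the same way, but not for free.

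Second, your primary argument for the vanishing of the equivariant connecting homomorphisms is incorrect as stated: for an $H^\bullet(BT)$-linear map $\delta$ of free graded $H^\bullet(BT)$-modules, $\delta\otimes_{H^\bullet(BT)}\bbC=0$ does \emph{not} imply $\delta=0$ --- multiplication by a positive-degree generator of $H^\bullet(BT)$ is a map of free modules that dies modulo the augmentation ideal but is nonzero, and nothing about cohomological degrees rules such a component out of a connecting map. So ``freeness lifts the vanishing back to the equivariant level'' fails. There is also a circularity: the equivariant formality you invoke (freeness of $H^\bullet_T$ over $H^\bullet(BT)$ together with $H^\bullet_T(\frakL)\otimes_{H^\bullet(BT)}\bbC\cong H^\bullet(\frakL)$) must itself be proved for the tensor-product sheaves, and the natural proof is degeneration of the Leray spectral sequence, which comes from purity --- at which point purity already kills the connecting maps directly, since a weight-preserving map between pure objects of adjacent cohomological degrees must vanish. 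This weight argument, packaged as \autoref{lem:equivariant-purity} and \autoref{lem:equivariant-analogues} and resting on the non-equivariant purity of \autoref{lem:purity-ses}, is what the paper uses. Your fallback --- approximating $ET$ by finite-dimensional smooth projective $ET_N$ and running the purity argument there --- is the correct repair and essentially reproduces the paper's route; with both flawed steps replaced by such purity/spectral-sequence arguments, your outline becomes the paper's proof.
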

If $n = \dim X$, then $L_{j,n} = \ic_{w_j}$ for each $j$. Hence this proposition implies \autoref{thm:maintheorem-equivariant}. To prove the proposition, we first state two general lemmas about $T$-equivariant cohomology of sheaves.
\begin{lemma}
  Consider the fiber bundle $ET\times_TX\to BT$, with fiber $X$. Let $\ic_w$ be the ($T$-equivariant) IC sheaf on the closure of a stratum $X_w$, extended by zero to all of $X$. Then the Leray spectral sequence for the computation of $H^\bullet_T(X;\ic_w) = H^\bullet(ET\times_TX;\overline{\ic_w})$ collapses at the $E_2$-page. Hence $H_T^\bullet(\ic_w)$ is isomorphic to $H^\bullet(\ic_w)\otimes H^\bullet(BT)$ as a graded $H_T$-module.
\end{lemma}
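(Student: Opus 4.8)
The plan is to analyze the Leray spectral sequence of the fibration $p\colon ET\times_T X\to BT$ applied to the sheaf $\overline{\ic_w}$, whose $E_2$-page reads
\[
E_2^{p,q} = H^p(BT; R^qp_*\overline{\ic_w}) \Longrightarrow H^{p+q}(ET\times_TX; \overline{\ic_w}) = H^{p+q}_T(\ic_w).
\]
First I would use that $BT$ is simply connected (it is a product of copies of $\cp^\infty$) to conclude that each higher direct image $R^qp_*\overline{\ic_w}$ is a \emph{constant} local system on $BT$, with stalk the ordinary fiber cohomology $H^q(\ic_w)$. This identifies the $E_2$-page as $E_2^{p,q}\cong H^p(BT)\otimes H^q(\ic_w)$.

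The heart of the argument is to show that all differentials $d_r\colon E_r^{p,q}\to E_r^{p+r,\,q-r+1}$ vanish for $r\geq 2$. I would run the spectral sequence in the category of mixed Hodge structures, so that each $d_r$ is a morphism of mixed Hodge structures and hence strictly weight-preserving. The two tensor factors are pure: $H^p(BT)$ is pure of weight $p$ (it is a polynomial ring on the degree-two Chern classes), and $H^q(\ic_w)$ is pure of weight $q$ by the purity of intersection cohomology of the projective variety $\overline{X_w}$. Therefore $E_2^{p,q}$, and every subquotient of it appearing later, is pure of weight $p+q$. Since the target $E_r^{p+r,\,q-r+1}$ is a subquotient of $E_2^{p+r,\,q-r+1}$, which is pure of weight $(p+r)+(q-r+1)=p+q+1$, and since a morphism between pure Hodge structures of distinct weights is zero, each $d_r$ vanishes. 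Hence the spectral sequence collapses at $E_2$.

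Collapse yields a graded vector space isomorphism $H^\bullet_T(\ic_w)\cong\bigoplus_{p+q=\bullet}H^p(BT)\otimes H^q(\ic_w)=H^\bullet(\ic_w)\otimes H^\bullet(BT)$. To upgrade this to an isomorphism of $H_T$-modules, I would note that the whole spectral sequence is one of $H^\bullet(BT)$-modules (the action coming from the base), so the collapse already exhibits $H^\bullet_T(\ic_w)$ as a \emph{free} $H^\bullet(BT)$-module on a lift of $H^\bullet(\ic_w)$; combining this with the compatibility of the cup product with $p^*$ and the identification $H_T\cong H^\bullet(X)\otimes H^\bullet(BT)$ gives the stated $H_T$-module isomorphism.

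I expect the main obstacle to be justifying that the spectral sequence genuinely lives in the category of mixed Hodge structures with weight-preserving differentials: this requires working with the finite-dimensional algebraic approximations $E_NT$, so that each $E_NT\times_T X$ and $B_NT$ is an honest variety and Deligne's theory applies, and then passing to the limit. Closely tied to this is pinning down the purity of $H^\bullet(\ic_w)$, which is exactly the decomposition-theorem (Gabber purity) input that makes the weight argument run. By contrast, the identification of the local systems as constant and the promotion from a graded vector-space isomorphism to an $H_T$-module isomorphism should be comparatively routine once the collapse is established.
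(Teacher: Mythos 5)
Your proof is correct and follows essentially the same route as the paper: the paper simply cites \cite[Theorem 14.1]{gkm}, noting that the argument rests on purity of $H^\bullet(BT)$, and your write-up is precisely that argument --- Leray spectral sequence for $ET\times_T X\to BT$, constant local systems since $BT$ is simply connected, and degeneration at $E_2$ because the differentials are weight-shifting morphisms between pure Hodge structures (purity of $H^\bullet(BT)$ together with Gabber--Saito purity of $H^\bullet(\ic_w)$ for the projective variety $\overline{X_w}$). Your attention to the finite-dimensional approximations $E_NT$ and to the module-structure upgrade fills in details the paper leaves to the reference.
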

\begin{proof}
  See \cite[Theorem 14.1]{gkm}. The proof uses the fact that the cohomology of $BT\cong (\cp^\infty)^{\dim T}$ is pure.
\end{proof}

\begin{lemma}
  \label{lem:equivariant-purity}
  Let $Y$ be any $T$-space, and let $\frakL$ be a $T$-equivariant sheaf on $Y$ such that the space $H^\bullet(Y;\frakL)$ is pure. Then $H^\bullet_T(Y;\frakL)$ is pure as well.
\end{lemma}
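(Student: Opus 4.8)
The plan is to run the Leray spectral sequence of the Borel fibration $ET\times_T Y\to BT$ (with fiber $Y$) that computes $H^\bullet_T(Y;\frakL) = H^\bullet(ET\times_T Y;\overline{\frakL})$, and to force it to degenerate at the $E_2$-page by a weight argument, exactly as in the preceding lemma. Since $BT\cong (\cp^\infty)^{\dim T}$ is simply connected, the local system on $BT$ with fiber $H^q(Y;\frakL)$ is trivial, so the $E_2$-page reads
\[
E_2^{p,q} = H^p(BT)\otimes H^q(Y;\frakL).
\]

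The crucial input is that each $H^p(BT)$ is pure of weight $p$: it is concentrated in even degrees, and $H^{2k}(BT)$ is pure of weight $2k$ because $BT$ is a product of complex projective spaces. Meanwhile $H^q(Y;\frakL)$ is pure of weight $q$ by hypothesis. Hence every term $E_2^{p,q}$ is pure of weight $p+q$. I would then use the fact that the Leray spectral sequence of this fibration is a spectral sequence of mixed Hodge structures, so that each differential $d_r\colon E_r^{p,q}\to E_r^{p+r,q-r+1}$ is a morphism of mixed Hodge structures. Each $E_r^{p,q}$ is a subquotient of $E_2^{p,q}$ and so remains pure of weight $p+q$; the target $E_r^{p+r,q-r+1}$ is pure of weight $p+q+1$. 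Since a morphism between pure Hodge structures of distinct weights vanishes, all differentials are zero, the spectral sequence degenerates at $E_2$, and
\[
H^n_T(Y;\frakL)\cong \bigoplus_{p+q=n}H^p(BT)\otimes H^q(Y;\frakL),
\]
with each summand pure of weight $n$. Thus $H^\bullet_T(Y;\frakL)$ is pure, as claimed.

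The main obstacle is justifying that the spectral sequence genuinely lives in the category of mixed Hodge structures, since $ET$ and $BT$ are infinite-dimensional. I would handle this by passing to the finite-dimensional Borel approximations: replace $ET$ by an algebraic approximation $ET_N$, so that $BT_N\cong(\cp^N)^{\dim T}$ and $ET_N\times_T Y$ are honest algebraic varieties whose cohomology carries mixed Hodge structures, and the Leray spectral sequence for the algebraic map $ET_N\times_T Y\to BT_N$ is one of mixed Hodge structures with differentials strict for the weight filtration. The purity of $H^p(BT_N)$ and the stabilization of these groups in each fixed degree as $N\to\infty$ then allow the weight argument to be run uniformly and passed to the limit. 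Once this is in place, both the degeneration and the resulting purity follow formally.
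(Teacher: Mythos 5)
Your proposal is correct and follows exactly the same route as the paper: the paper's proof is a two-line sketch invoking the Leray spectral sequence of $ET\times_T Y\to BT$ together with purity of $H^\bullet(BT)$ and $H^\bullet(Y;\frakL)$, which is precisely the argument you carry out. Your write-up simply supplies the details the paper omits---the weight-based vanishing of differentials and the passage to finite-dimensional approximations of $ET$---both of which are the standard and appropriate justifications.
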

\begin{proof}
  Recall that $H^\bullet_T(Y,\frakL) = H^\bullet(ET\times_TX, \overline{\frakL})$. The result follows from computing the Leray spectral sequence  for the fiber bundle $ET\times_T Y\to BT$, and by using that $H^\bullet(BT)$ and $H^\bullet(Y,\frakL)$ are pure.
\end{proof}

We also record several results that are equivariant analogues of results from \autoref{sec:mainproof}.
\begin{lemma}\mbox{}
  \label{lem:equivariant-analogues}
  \begin{enumerate}[(i)]
  \item The boundary maps in the long exact sequences of $T$-equivariant cohomology for the distinguished triangles \eqref{eq:maintriangle} and \eqref{eq:othertriangle} vanish. 
  \item The cohomology $H^\bullet_T(L_{1,n}\otimes\cdots\otimes L_{m,n})$ is pure.
  \item There is a short exact sequence
    \[
    0\to H^\bullet_{T,c}(u^*L_{1,n}\otimes\cdots\otimes u^*L_{m,n})\to H_T^\bullet(L_{1,n}\otimes\cdots\otimes L_{m,n})\to H^\bullet_T(L_{1,n-1}\otimes\cdots\otimes L_{m,n-1})\to 0.
    \] 
  \end{enumerate}

\end{lemma}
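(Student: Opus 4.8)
The plan is to establish the three parts in the order (ii), (i), (iii), transferring purity from the non-equivariant results of \autoref{sec:mainproof} wherever possible by means of \autoref{lem:equivariant-purity}. Part (ii) is immediate: the complex $L_{1,n}\otimes\cdots\otimes L_{m,n}$ is $T$-equivariant, being the derived tensor product of the $T$-equivariant sheaves $L_{j,n}=i_n^*\ic_{w_j}$, and its ordinary hypercohomology is pure by \autoref{lem:purity-ses}(\ref{item:purity}). Applying \autoref{lem:equivariant-purity} with $Y=X_n$ and $\frakL=L_{1,n}\otimes\cdots\otimes L_{m,n}$ then yields purity of $H^\bullet_T(L_{1,n}\otimes\cdots\otimes L_{m,n})$.

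For part (i) I would reprove the vanishing by a weight argument, mirroring the cited non-equivariant statement. The long exact sequence attached to \eqref{eq:maintriangle}, and likewise that of \eqref{eq:othertriangle}, is a sequence of mixed Hodge structures, so its connecting homomorphisms are morphisms of mixed Hodge structures. Each of the three sheaves in the triangle has pure ordinary hypercohomology: $u_!u^*L_{j,n}$ computes the compactly supported cohomology of the affine cell $U_n$ with coefficients in a direct sum of shifted constant sheaves, $v_*v^*L_{j,n}$ computes the cohomology of $L_{j,n-1}$ on $X_{n-1}$, and $L_{j,n}$ has pure cohomology by the references already cited. By \autoref{lem:equivariant-purity} each has pure equivariant cohomology, with $H^k_T(\cdot)$ of weight $k$. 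A connecting map $H^k_T(\cdot)\to H^{k+1}_T(\cdot)$ is then a weight-preserving morphism from a pure structure of weight $k$ into one of weight $k+1$, and so must vanish.

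For part (iii) I would follow the proof of \autoref{lem:purity-ses}(\ref{item:purity-ses}) verbatim in the equivariant setting: apply the functor $-\otimes M_{m,n}$ to the triangle \eqref{eq:maintriangle} for $L_{1,n}$ and identify the outer terms by the projection formula, obtaining the long exact sequence
\[
\cdots\to H^\bullet_{T,c}(u^*L_{1,n}\otimes\cdots\otimes u^*L_{m,n})\to H^\bullet_T(L_{1,n}\otimes\cdots\otimes L_{m,n})\to H^\bullet_T(L_{1,n-1}\otimes\cdots\otimes L_{m,n-1})\to\cdots.
\]
The middle and right terms are pure by part (ii) at levels $n$ and $n-1$. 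The left term is pure because $u^*L_{1,n}\otimes\cdots\otimes u^*L_{m,n}$ is a direct sum of shifted constant sheaves on the affine cell $U_n$, so that its equivariant compactly supported cohomology is a direct sum of shifts of $H^\bullet_{T,c}(U_n)$, which the equivariant Thom isomorphism identifies with a shift of $H^\bullet(BT)$ and which is therefore pure. The weight argument of part (i) then kills the connecting homomorphisms of this sequence, leaving the asserted short exact sequence.

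The main obstacle is the purity of the compactly supported equivariant term, since \autoref{lem:equivariant-purity} is stated only for ordinary cohomology. I expect to handle this using the commutative diagram built earlier in \autoref{sec:mainproof-equivariant}, in which multiplication by the equivariant Poincar\'e dual class $\widetilde{c}_n$ furnishes the isomorphism $H^\bullet_T(U_n)\cong H^\bullet_{T,c}(U_n)$; this both lets one transfer purity to the compactly supported side and pins down the weight normalization needed to make the vanishing of the connecting maps rigorous.
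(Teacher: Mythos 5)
Your proposal is correct and takes essentially the same route as the paper, whose proof of this lemma is a single sentence: repeat the arguments of \autoref{sec:mainproof} (the long exact sequences from \eqref{eq:maintriangle} and \eqref{eq:othertriangle} plus purity-forces-vanishing of connecting maps), transferring purity to the equivariant setting via \autoref{lem:equivariant-purity} and the purity of $H^\bullet(BT)$. The details you supply --- the weight-mismatch argument for part (i) and the purity of $H^\bullet_{T,c}$ of the open cell via the equivariant Thom isomorphism (equivalently the $\widetilde{c}_n$ diagram) --- are exactly the steps the paper leaves implicit.
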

\begin{proof}
  The proofs are analogous to the proofs of their counterparts from \autoref{sec:mainproof}, using the observation of \autoref{lem:equivariant-purity} and the fact that $H^\bullet(BT)$ is pure.
\end{proof}

Recall that $M_{m,n}$ denotes $L_{2,n}\otimes \cdots \otimes L_{m,n}$. For brevity, we set up the following additional notation.
\begin{align*}
  \overline{A}_{m,n}& = H^\bullet_T(L_{2,n})\otimes_{H_T}\cdots \otimes_{H_T}H^\bullet_T(L_{m,n}),\\
  \overline{B}_{m,n}& = H^\bullet_T(u_*u^*L_{2,n})\otimes_{H_T}\cdots \otimes_{H_T}H^\bullet_T(u_*u^*L_{m,n}).
\end{align*}

\begin{lemma}
  \label{lem:condensed-les-equivariant}
  There is an exact sequence
  \[
  H^\bullet_T(u_!u^*L_{1,n})\otimes_{H_T}\overline{B}_{m,n}\to H^\bullet_T(L_{1,n})\otimes_{H_T}\overline{A}_{m,n}\to H^\bullet_T(v_*v^*L_{1,n})\otimes_{H_T}\overline{A}_{m,n}\to 0.
  \]
\end{lemma}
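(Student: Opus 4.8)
The plan is to follow the proof of \autoref{lem:condensed-les} essentially verbatim, replacing each non-equivariant ingredient by the equivariant counterpart set up earlier in this section. First I would apply the functor $H^\bullet_T(-)$ to the distinguished triangle \eqref{eq:maintriangle} for $L_{1,n}$. By \autoref{lem:equivariant-analogues}(i) the boundary maps of the resulting long exact sequence vanish, so after tensoring with $\overline{A}_{m,n}$ over $H_T$ I obtain the right-exact sequence
\[
H^\bullet_T(u_!u^*L_{1,n})\otimes_{H_T}\overline{A}_{m,n}\stackrel{f}{\too} H^\bullet_T(L_{1,n})\otimes_{H_T}\overline{A}_{m,n}\stackrel{g}{\too} H^\bullet_T(v_*v^*L_{1,n})\otimes_{H_T}\overline{A}_{m,n}\to 0.
\]

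Next, for each $j\geq 2$ the triangle \eqref{eq:othertriangle} together with the vanishing of the equivariant boundary maps yields a surjection $H^\bullet_T(L_{j,n})\surj H^\bullet_T(u_*u^*L_{j,n})$. Tensoring these over $H_T$, along with the identity on $H^\bullet_T(u_!u^*L_{1,n})$, produces a surjection
\[
h\colon H^\bullet_T(u_!u^*L_{1,n})\otimes_{H_T}\overline{A}_{m,n}\surj H^\bullet_T(u_!u^*L_{1,n})\otimes_{H_T}\overline{B}_{m,n}.
\]
To deduce the claimed sequence I must show that $f$ descends along $h$, that is, $f(\ker h)=0$; the induced map $H^\bullet_T(u_!u^*L_{1,n})\otimes_{H_T}\overline{B}_{m,n}\to H^\bullet_T(L_{1,n})\otimes_{H_T}\overline{A}_{m,n}$ then has image equal to $\operatorname{im} f=\ker g$, so exactness at the middle term follows from the right-exact sequence above.

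The factorization is where the real content lies and is the main obstacle. As before, the vanishing of the boundary maps shows that $\ker h$ is generated by elementary tensors $a_1\otimes\cdots\otimes a_m$ in which some factor $a_j$ (for $j\geq 2$) lies in the image of $H^\bullet_T(v_*v^!L_{j,n})$. The crucial input is the behaviour of the equivariant Poincar\'e dual $\widetilde{c}_n$ recorded in the commutative $\widetilde{c}_n$-diagram established earlier in this section: multiplication by $\widetilde{c}_n$ annihilates the equivariant cohomology of any sheaf supported on $X_{n-1}$, so $\widetilde{c}_n a_j = 0$ for such a factor; and by the same diagram the image of $a_1$ under $H^\bullet_T(u_!u^*L_{1,n})\to H^\bullet_T(L_{1,n})$ lies in $\widetilde{c}_n\cdot H^\bullet_T(L_{1,n})$. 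Since all tensor products are taken over $H_T$, the scalar $\widetilde{c}_n$ can be transported from the first factor onto the factor $a_j$, forcing $f(a_1\otimes\cdots\otimes a_m)=0$.

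The remaining steps are formal. The only point to double-check is that the generators-of-$\ker h$ description and the two properties of $\widetilde{c}_n$ transfer verbatim from the facts cited in \cite{ginsburg}; this is exactly what \autoref{lem:equivariant-analogues} and the explicit $\widetilde{c}_n$-diagram were arranged to guarantee, so that every occurrence of $c_n$, $H$, and $H^\bullet$ in the proof of \autoref{lem:condensed-les} may be replaced by $\widetilde{c}_n$, $H_T$, and $H^\bullet_T$ with no further change.
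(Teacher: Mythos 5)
Your proposal is correct and is exactly the argument the paper intends: the paper's own proof is the one-line remark that it is ``analogous to the proof of \autoref{lem:condensed-les},'' and you have carried out that analogy in full, correctly substituting \autoref{lem:equivariant-analogues}(i) for the non-equivariant vanishing of boundary maps and the equivariant class $\widetilde{c}_n$ (with its annihilation and factorization properties from the $\widetilde{c}_n$-diagram established earlier in the section) for the facts about $c_n$ cited from \cite{ginsburg}.
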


\begin{proof}
  The proof is analogous to the proof of \autoref{lem:condensed-les}, using the fact that $H^\bullet_T(X)\cong H^\bullet(X)\otimes H^\bullet(BT)$.
\end{proof}

\begin{lemma}
  \label{lem:mapa-twofactors-equivariant}
  Let $F$ be any $T$-equivariant sheaf supported on $U = X_n\backslash X_{n-1}$. Let $M$ be the restriction to $X_n$ of some IC sheaf $\ic_x$. Then the multiplication map
  \[
  H^\bullet_T(u_!F)\otimes_{H^\bullet(BT)}H_T^\bullet(u_*u^*M)\to H_T^\bullet(u_!F\otimes u_*u^*M)
  \]
  is an isomorphism.
\end{lemma}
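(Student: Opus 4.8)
The plan is to transport the proof of \autoref{lem:mapa-twofactors} to the Borel construction, replacing the projections to a point used there by projections to $BT$. Let $\pi\colon ET\times_T X_n\to BT$ and $\sigma\colon ET\times_T U\to BT$ be the two bundle projections, let $u\colon ET\times_T U\to ET\times_T X_n$ also denote the open inclusion induced by $u$, and let $\overline{F}$, $\overline{M}$ be the canonically defined sheaves on $ET\times_T X_n$ attached to $F$ and $M$. Since $\pi\circ u=\sigma$ and $\pi$ is proper (its fiber $X_n$ is proper), I would first identify
\[
H^\bullet_T(u_!F)=H^\bullet(BT,\sigma_!\overline{F}),\qquad H^\bullet_T(u_*u^*M)=H^\bullet(BT,\sigma_*u^*\overline{M}),
\]
using $\pi_*u_!=\sigma_!$ and $\pi_*u_*=\sigma_*$, exactly as in the non-equivariant argument but now over the base $BT$ instead of a point.

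Next I would reproduce the diagram \eqref{eq:adj-chase-diagram} with $\pi$ and $\sigma$ now mapping to $BT$. The projection formula for $\pi$ and for $\sigma$ is functorial and compatible with the composition $\pi\circ u=\sigma$, so it supplies all of the horizontal and the upper vertical ``proj.'' isomorphisms over $BT$ unchanged, and the two lower vertical maps are again the counit of the $(\pi^*,\pi_*)$-adjunction. As before the whole statement then reduces to the single claim that the counit
\[
u^*\pi^*\pi_*u_*u^*\overline{M}=\sigma^*\sigma_*(u^*\overline{M})\too u^*u_*u^*\overline{M}=u^*\overline{M}
\]
is an isomorphism of sheaves on $ET\times_T U$. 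This is where Assumption~\ref{assum:contraction} enters: since $u^*M$ is a direct sum of shifted constant sheaves on the affine cell $U$ and $U$ is $T$-equivariantly contractible to the fixed point $w$, the sheaf $u^*\overline{M}$ is constant along the fibers of $\sigma$, so pushing it to $BT$ and pulling back recovers it and the counit is an isomorphism.

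Finally I would pass from this sheaf-level isomorphism on $BT$ to the asserted isomorphism of $H^\bullet(BT)$-modules. Taking hypercohomology over $BT$ turns the left-hand vertical composite of the diagram into the cup-product map of the lemma; the tensor product is over $H^\bullet(BT)$ rather than over $\bbC$ because the projection formula is taken relative to $BT$, and because the $H_T$-module structures on both factors factor through the restriction $H_T\to H^\bullet_T(U)\cong H^\bullet_T(w)=H^\bullet(BT)$, again using that $U$ is equivariantly contractible to $w$. I expect the main obstacle to lie precisely in this last step: unlike in \autoref{lem:mapa-twofactors}, where the base is a point and hypercohomology is exact, here one must know that the cup product over $H^\bullet(BT)$ computes the hypercohomology of the tensor product on $BT$, that is, a relative Künneth statement. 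This follows from the freeness of the relevant equivariant cohomologies over $H^\bullet(BT)$, which is guaranteed by the Leray collapse of the first lemma of this section together with \autoref{lem:equivariant-purity}; carefully verifying that these freeness hypotheses apply to $u_!F$ and $u_*u^*M$, and to their tensor product, is the part that requires the most care.
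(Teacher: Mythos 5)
Your route is genuinely different from the paper's. The paper never manipulates sheaves over $BT$ at all: it runs the multiplicative Leray spectral sequence for $ET\times_T X_n\to BT$, observes that the local systems $H^q(u_!F)$ and $H^s(u_*u^*M)$ are constant on $BT$, applies the non-equivariant \autoref{lem:mapa-twofactors} \emph{fiberwise} on the $E_2$-page, and then passes to the abutment using freeness of the relevant modules over $H^\bullet(BT)$. You instead propose to lift the adjunction chase \eqref{eq:adj-chase-diagram} to the Borel construction, working relative to $BT$. That is a legitimate strategy, but as written it has a genuine gap.

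The gap is your justification of the counit claim. You assert that $\sigma^*\sigma_*(u^*\overline{M})\to u^*\overline{M}$ is an isomorphism because $u^*M$ is a direct sum of shifted constant sheaves and $U$ is $T$-equivariantly contractible, ``so $u^*\overline{M}$ is constant along the fibers of $\sigma$.'' But the splitting of $u^*M$ into shifted constant sheaves is a purely non-equivariant fact (it comes from pointwise purity), and nothing guarantees that this splitting is compatible with the $T$-equivariant structure; hence it does not descend to a splitting of $u^*\overline{M}$ into complexes pulled back from $BT$, which is what ``pushing to $BT$ and pulling back recovers it'' actually requires. To close the gap you would need to show that $u^*\overline{M}$ lies in the essential image of $\sigma^*$ by a separate argument: for instance, check that its cohomology sheaves are constant (they pull back under $q$ to the constant cohomology sheaves of $\pr^*u^*M$, and $ET\times_T U$ is simply connected), check that $\sigma^*$ is fully faithful on complexes with constant cohomology sheaves (homotopy invariance of cohomology with local coefficients, $\sigma$ being a bundle with contractible fiber), and then run a d\'evissage over the truncation triangles of $u^*\overline{M}$. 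I believe this can be carried out, but it is a real missing step---and it is precisely the kind of sheaf-level statement over $BT$ that the paper's spectral-sequence argument is designed to avoid, since the paper only ever invokes the non-equivariant isomorphism on fibers.

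A second, smaller point: the final ``relative K\"unneth'' step, which you rightly flag as delicate, is not a routine verification. Making it precise amounts to the same collapse-and-freeness bookkeeping that constitutes the paper's entire proof; moreover, the collapse lemma stated in the paper applies to the sheaves $\ic_w$ on $X$, not to $u_!F$ and $u_*u^*M$, so you must establish freeness of $H^\bullet_T(u_!F)$ and $H^\bullet_T(u_*u^*M)$ over $H^\bullet(BT)$ directly (e.g.\ from constancy of the corresponding local systems on $BT$ together with degeneration of those specific Leray spectral sequences). So even once repaired, your argument is not shorter than the paper's: it needs the spectral-sequence input anyway, plus the sheaf-level counit claim on top of it.
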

\begin{proof}
  Consider the fiber bundle $ET\times_TX_n\to BT$, with fiber $X_n$. The $E_2$ pages of the Leray spectral sequences for $u_!F$ and $u_*u^*M$ are as follows:
  \begin{align*}
    H^p(BT,H^q(u_!F)) &\implies H^{p+q}_T(u_!F),\\
    H^r(BT,H^s(u_*u^*M)) &\implies H^{r+s}_T(u_*u^*M).
  \end{align*}
  On the $E_2$ page, the multiplication map can be written as the composition of the following two maps. The first map is the cup product with local coefficients, and the second is the fiber-wise cup product on the local systems.
  \begin{align*}
    H^p(BT, H^q(u_!F))\otimes_{H^\bullet(BT)}H^r(BT,H^s(u_*u^*M))&\to H^{p+r}(BT, H^q(u_!F)\otimes H^r(u_*u^*M)),\\
    H^{p+r}(BT, H^q(u_!F)\otimes H^s(u_*u^*M)) &\to H^{p+r}(BT, H^{q+s}(u_!F\otimes u_*u^*M)).
  \end{align*}
  Since the local systems $H^q(u_!F)$ and $H^s(u^*u_*M)$ are constant on $BT$, the first map yields isomorphisms
  \[
  H^\bullet(BT, H^q(u_!F))\otimes_{H^\bullet(BT)}H^\bullet(BT, H^s(u^*u_*M)) \stackrel{\cong}{\too} H^\bullet(BT, H^q(u_!F)\otimes H^s(u^*u_*M)).
  \]
  Finally, we know from \autoref{lem:mapa-twofactors} that $H^\bullet(u_!F)\otimes H^\bullet(u_*u^*M)\stackrel{\cong}{\too} H^\bullet(u_!F\otimes u_*u^*M)$ via the multiplication map. Altogether, the multiplication maps on the $E_2$ page yield an isomorphism 
  \[
  H^\bullet(BT,H^\bullet(u_!F))\otimes_{H^\bullet(BT)}H^\bullet(BT,H^\bullet(u_*u^*M))\stackrel{\cong}{\longrightarrow} H^\bullet(BT,H^\bullet(u_!F\otimes u_*u^*M)).
  \]

  The left hand side is a tensor product of two free $H^\bullet(BT)$-modules over $H^\bullet(BT)$. Hence it converges to $H^\bullet_T(u_!F)\otimes_{H^\bullet(BT)}H^\bullet_T(u_*u^*M)$. The right hand side converges to $H^\bullet_T(u_!F\otimes u_*u^*M)$. Since the $E_2$ pages of the left hand side and the right hand side are isomorphic via the multiplication map, the following multiplication map 
  \[
  H^\bullet_T(u_!F)\otimes_{H^\bullet(BT)}H^\bullet_T(u_*u^*M)\to H^\bullet_T(u_!F\otimes u_*u^*M) 
  \]
  is an isomorphism.
\end{proof}

\begin{proof}[Proof of \autoref{thm:maintheorem-equivariant}]
  We obtain the following commutative diagram from the exact sequences of \autoref{lem:equivariant-analogues} and \autoref{lem:condensed-les-equivariant}.
    \begin{equation}
  \label{eq:big-comm-diagram-equivariant}
  \begin{gathered}
    \begin{tikzpicture}[column sep=2em, row sep=2em]
      \matrix(m)[matrix of math nodes]
      {&H_T^\bullet(u_!u^*L_{1,n})\otimes_{H_T} \overline{B}_{m,n}& H_T^\bullet(L_{1,n})\otimes_{H_T} \overline{A}_{m,n} & H_T^\bullet(v_*v^*L_{1,n})\otimes_{H_T} \overline{A}_{m,n}&0\\
        0&H_T^\bullet(u_!u^*L_{1,n}\otimes M_{m,n})& H_T^\bullet(L_{1,n}\otimes M_{m,n})& H_T^\bullet(v_*v^*L_{1,n}\otimes M_{m,n})&0\\};
      \path[->, font=\scriptsize]
      (m-1-2) edge (m-1-3) edge node[auto] {$a$} (m-2-2)
      (m-1-3) edge node[auto] {$b$} (m-2-3) edge (m-1-4)
      (m-1-4) edge (m-1-5) edge node[left] {$c$} (m-2-4)
      (m-2-1) edge (m-2-2)
      (m-2-2) edge (m-2-3)
      (m-2-3) edge (m-2-4)
      (m-2-4) edge (m-2-5);
    \end{tikzpicture}
  \end{gathered}
\end{equation}

Now we can prove by induction on $m$ that the map $a$ is an isomorphism. First observe that the action of $H_T$ on $H^\bullet_T(u_!u^*L_{1,n})$ and on $\overline{B}_{m,n}$ factors through the map $H_T\to H^\bullet_T(U)\cong H^\bullet(BT)$, so
\[
H^\bullet_T(u_!u^*L_{1,n})\otimes_{H_T}\overline{B}_{m,n}\cong H^\bullet_T(u_!u^*L_{1,n})\otimes_{H^\bullet(BT)}\overline{B}_{m,n}.
\]
Hence the case of $m = 2$ is covered by \autoref{lem:mapa-twofactors-equivariant}. The general case is proved using an argument similar to that in \autoref{lem:mapa-isomorphism}. An argument similar to the proof of \autoref{lem:mapc-isomorphism} proves that the map $c$ is an isomorphism.

Hence by the snake lemma, the middle map $b$ is an isomorphism as well. Consequently, we obtain the following isomorphisms for every $n$:
\[
H^\bullet_T(L_{1,n})\otimes_{H_T}\cdots \otimes_{H_T}H_T^\bullet(L_{m,n})\to H^\bullet_T(L_{1,n}\otimes\cdots\otimes L_{m,n}).
\]
Setting $n = \dim X$, we see that the natural map
\[
H^\bullet_T(\ic_{w_1})\otimes_{H_T}\cdots \otimes_{H_T}H_T^\bullet(\ic_{w_m})\to H^\bullet_T(\ic_{w_1}\otimes\cdots\otimes \ic_{w_m})
\]
is an isomorphism.
\end{proof}

\subsection*{Acknowledgments}
I would like to thank my advisor Victor Ginzburg for suggesting the problem and for his advice and guidance throughout the project.

\bibliographystyle{amsalpha}
\bibliography{bibliography}

\end{document}